\newtheorem{defin}{Definition}[section]
\newtheorem{theorem}[defin]{Theorem}
\newtheorem{lemma}[defin]{Lemma}
\newtheorem{corollary}[defin]{Corollary}
\newtheorem{remark}[defin]{Remark}
\newcommand{\C}{\mathbb{C}}
\newcommand{\R}{\mathbb{R}}
\newcommand{\Z}{\mathbb{Z}}
\newcommand{\N}{\mathbb{N}}
\newcommand{\bigperp}{%
  \mathop{\mathpalette\bigp@rp\relax}%
  \displaylimits
}
\newcommand{\bigp@rp}[2]{%
  \vcenter{
    \m@th\hbox{\scalebox{\ifx#1\displaystyle2.1\else1.5\fi}{$#1\perp$}}
  }%
}
\DeclareMathOperator{\vol}{vol}
\DeclareMathOperator{\Tr}{Tr}
\DeclareMathOperator{\supp}{supp}
\DeclareMathOperator{\dist}{dist}
\definecolor{arne}{rgb}{0.2,.7,0.9}
\definecolor{frank}{rgb}{0.8,0.2,0.5}
\definecolor{moritz}{rgb}{0,0.5,0.5}
\begin{document}

\title{A semidefinite program for least distortion embeddings of flat tori into Hilbert spaces}

\address{A.~Heimendahl, M.~L\"ucke, F.~Vallentin,
  M.C.~Zimmermann, Department Mathematik/Informatik, Abteilung
  Mathematik, Universit\"at zu K\"oln, Weyertal~86--90, 50931 K\"oln,
  Germany}

\author{Arne Heimendahl}
\email{arne.heimendahl@uni-koeln.de}

\author{Moritz L\"ucke}
\email{moritz.luecke1997@gmail.com}

\author{Frank Vallentin}
\email{frank.vallentin@uni-koeln.de}

\author{Marc Christian Zimmermann}
\email{marc.christian.zimmermann@gmail.com}

\date{March 30, 2026}

\subjclass{46B85, 52C07, 90C22}

\begin{abstract}
  We derive and analyze an infinite-dimensional semidefinite program
  which computes least distortion embeddings of flat tori $\R^n/L$,
  where $L$ is an $n$-dimensional lattice, into Hilbert spaces.

  This enables us to provide a constant factor improvement over the
  previously best lower bound on the minimal distortion of an
  embedding of an $n$-dimensional flat torus.

  As further applications we prove that every $n$-dimensional flat
  torus has a finite dimensional least distortion embedding, that the
  standard embedding of the standard torus is optimal, and we
  determine least distortion embeddings of all $2$-dimensional flat
  tori.
\end{abstract}

\maketitle

\markboth{A. Heimendahl, M. L\"ucke, F. Vallentin, and
  M.C. Zimmermann}{A semidefinite program for least distortion embeddings of flat tori into Hilbert spaces}

\section{Introduction}

Lattices (discrete subgroups of $n$-dimensional Euclidean spaces) are
central to the geometry of integer programming. One good way to
describe geometric properties of a given lattice $L$  are fundamental
domains of $\R^n$ with respect to translations by $L$. The quotient
$\R^n / L$ is itself a metric space; a flat torus.

Approximating metric spaces by Euclidean spaces to design efficient
approximation algorithms has been a central theme in theoretical
computer science in the last two decades. There the starting point is
computing a least distortion embedding of a ``difficult'' metric
spaces into an ``easy'' normed space.

Least distortion embeddings of flat tori into Hilbert spaces were
first studied by Khot and Naor \cite{Khot2006a} in 2006. One
motivation is that studying the Euclidean distortion of flat tori
might have applications to the complexity of lattice problems, like
the closest vector problem, and might also lead to more efficient
algorithms for lattice problems through the use of least distortion
embeddings. Another motivation comes from comparing the Riemannian
setting to the bi-Lipschitz setting we are discussing here.  On the
one hand, by the Nash embedding theorem, flat tori can be embedded
isometrically as Riemannian submanifolds into Euclidean space; we
refer to \cite{Borrelli2012a} for spectacular visualizations of such
an isometric embedding in the case of the two-dimensional square flat
torus. On the other hand, Khot and Naor showed that flat tori can be
highly non-Euclidean in the bi-Lipschitz setting.

\subsection{Notation and review of the relevant literature}

We review the relevant results of the literature which appeared since
the pioneering work of Khot and Naor. At the same time we set the
notation for this paper.

By $|\cdot|$ we denote the standard norm of $\R^n$
given by $|x| = \sqrt{x^{\sf T} x}$. An $n$-dimensional lattice is a discrete subgroup of $(\R^n, +)$
consisting of all integral linear combinations of a basis of
$\R^n$. Then, a flat torus is the metric space given by the quotient $\R^n / L$ with
some $n$-dimensional lattice $L \subseteq \R^n$ and with metric
\[
  d_{\R^n / L}(x,y) = \min_{v \in L} |x - y - v|.
\]

A Euclidean embedding of $\R^n / L$ is an injective function
$\varphi \colon \R^n / L \to H$ mapping the flat torus $\R^n / L$ into
some (complex) Hilbert space $H$. The \emph{distortion} of $\varphi$ is
\[
  \dist(\varphi) =
  \sup_{\substack{x,y \in \R^n / L \\  x \neq y}} \frac{\|\varphi(x) - \varphi(y)\|}{d_{\R^n/L}(x,y)}
  \cdot \sup_{\substack{x,y \in \R^n / L \\  x \neq y}} \frac{d_{\R^n/L}(x,y)}{\|\varphi(x) - \varphi(y)\|},
\]
where $\| \cdot \|$ is the norm of the Hilbert space $H$.  Here the
first supremum is called the \emph{expansion} of $\varphi$ and the second
supremum is the \emph{contraction} of $\varphi$. When we minimize the
distortion of $\varphi$ over all possible embeddings of $\R^n / L$
into Hilbert spaces we speak of the \emph{least (Euclidean) distortion} of the
flat torus; it is denoted by
 \[
   c_2(\R^n / L) = \inf\{\dist(\varphi) \; : \; \varphi \colon \R^n /
   L \to H \text{ for some Hilbert space } H, \; \varphi \text{ injective}\}.
 \]
Similarly one can define $c_1(\R^n / L)$ by replacing the Hilbert space
by some $L^1$ space.

Khot and Naor showed (see \cite[Corollary 4]{Khot2006a}) that flat
tori can be highly non-Euclidean in the sense that there is a family
of flat tori $\R^n / L_n$ with
\begin{equation} \label{eq:non-Euclidean}
  c_2(\R^n / L_n) = \Omega(\sqrt{n}).
\end{equation}
On the other hand, they noticed (see \cite[Remark 5]{Khot2006a}) that
the standard embedding of the flat torus $\R^n / \Z^n$ into $\R^{2n}$ has distortion $O(1)$\footnote{In fact, we have $ \dist(\varphi) = \pi/2  $ and $ \varphi  $ is an optimal embedding, see Theorem~\ref{thm:standard-torus-embedding}.}. The standard embedding is given
by
\begin{equation}
  \label{eq:standard-embedding}
  \varphi(x_1, \ldots, x_n) = (\cos 2\pi x_1, \sin 2\pi x_1, \ldots,
  \cos 2\pi x_n, \sin 2\pi x_n).
\end{equation}

In fact, Khot and Naor are mainly concerned with bounding
$c_1(\R^n / L)$, which immediately provides bounds for $c_2(\R^n / L)$
because $c_1(\R^n / L) \leq c_2(\R^n / L)$. To state their main
result, leading to \eqref{eq:non-Euclidean}, we make use of the
Voronoi cell of $L$, which is an $n$-dimensional polytope defined as
\[
  V(L) = \{x \in \R^n : |x| \leq |x - v| \text{ for all  } v \in L\}.
\]
The Voronoi cell is a fundamental domain of $\mathbb{R}^n / L$ under
the action of $L$. We denote the volume of $V(L)$ by $\vol L$. Clearly, $|x| =   d_{\R^n / L}(x,0)$ for all $x \in V(L)$.
The covering radius of $L$ is $\mu(L) = \max\{|x| : x \in V(L)\}$,
which is the circumradius of $V(L)$.  The length of a shortest vector
of $L$ is $\lambda(L) = \min\{|v| : v \in L \setminus \{0\}\}$ that
is two times the inradius of $V(L)$. Now the main result (see
\cite[Theorem 5]{Khot2006a}) is
\begin{equation}
\label{eq:KNmain}
c_1(\R^n / L) = \Omega\left( \frac{\lambda(L^*) \sqrt{n}}{\mu(L^*)} \right).
\end{equation}
Here, as usual,
$L^* = \{ u \in \R^n : u^{\sf T} v \in \Z \text{ for all } v \in L\}$
denotes the dual lattice of $L$. They also give an alternative proof
of their main result for $c_2(\R^n / L)$ (see \cite[Lemma
11]{Khot2006a}). The main result leads to the lower
bound \eqref{eq:non-Euclidean} when plugging in duals of lattices
which simultaneously provide dense packings and economical
coverings. Such a family of lattices exist by a theorem of
Butler~\cite{Butler1972a}.

Using Korkine-Zolotarev reduction Khot and Naor determine an embedding
of $\R^n / L$ into $\R^{2n}$ with distortion $O(n^{3n/2})$ (see
\cite[Theorem 6]{Khot2006a}).

 Haviv and Regev \cite[Theorem 1.3]{Haviv2010} found an improved
 embedding that yields $c_2(\R^n/L) = O(n \sqrt{\log n})$. They also
 improved on \eqref{eq:KNmain} and showed in \cite[Theorem
 1.5]{Haviv2010} that for any $n$-dimensional lattice $L$ we have
 \begin{equation}
   \label{eq:HRbound}
  c_2(\R^n/L) \geq \frac{\lambda(L^*) \mu(L)}{4\sqrt{n}},
\end{equation}
which improves on \eqref{eq:KNmain} because
$\mu(L) \mu(L^*) \geq \Omega(n)$ holds for every $n$-dimensional
lattice. This follows from a simple volume argument giving
$\mu(L) = \Omega(\sqrt{n} (\vol L)^{1/n})$ and $ \vol L^* = (\vol L)^{-1}  $.

Recently, Agarwal, Regev, Tang \cite{Agarwal2020a} constructed
excellent embeddings of flat tori having low distortion and showed that
the lower bound~\eqref{eq:non-Euclidean} is nearly tight: For every
lattice $L \subseteq \R^n$ there exists an embedding of $\R^n/L$ into
Hilbert space with distortion $O(\sqrt{n \log n})$.

\subsection{Aim and method}

In this paper we want to add a semidefinite optimization perspective
to this story.

For finite metric spaces it is known that one can compute least
distortion Euclidean embeddings via a semidefinite program (SDP),
which is linear optimization over the cone of positive semidefinite
matrices. We want to extend this result from finite metric spaces to
flat tori. This will yield, via semidefinite programming duality, an
algorithmic method for proving nonembeddability results. In
particular, this leads to a new, simple proof of~\eqref{eq:HRbound}.
In fact we even get a constant factor improvement that is tight in the case of the standard torus.

First we recall the semidefinite program for finding least Euclidean distortion
 embeddings of finite metric spaces.  Suppose we consider a
finite metric space $X$ with distance function $d$. Then, as first
observed by Linial, London, Rabinovich \cite{Linial1995a}, we can
find a least distortion embedding of $(X,d)$ into a Hilbert space
algorithmically by solving the following semidefinite program
\begin{equation}
  \label{eq:sdp-primal-low-distortion}
  \begin{split}
 \min\{C \; : \; & C \in \R_+, Q \in \mathcal{S}^X_+,\\
&    d(x,y)^2 \leq Q_{xx} - 2Q_{xy} +
Q_{yy} \leq C d(x,y)^2 \text{ for all } x, y \in X\},
\end{split}
\end{equation}
where $\mathcal{S}^X_+$ denotes the convex cone of positive
semidefinite matrices whose rows and columns are indexed by the
elements of $X$. The optimal solution $C$ of this semidefinite program
equals $c_2(X,d)^2$ and if $Q$ attains the optimal solution, then we
can determine a least distortion embedding $\varphi : X \to \R^X$ with
the property $\varphi(x) \cdot \varphi(y) = Q_{xy}$ by considering a
Cholesky decomposition of $Q$.

This shows how to compute (in fact in polynomial time) an optimal Euclidean
embedding of a finite metric space. Another benefit of this
formulation is that we can apply duality theory of semidefinite
programs. Then the dual maximization problem will play a key role to
determine lower bounds for $c_2(X,d)$.  By using strong duality we
arrive at the following result: The least distortion of a finite
metric space $(X,d)$, with $X = \{x_1, \ldots, x_n\}$, into Euclidean
space is given by
\begin{equation}
  \label{eq:sdp-dual-low-distortion}
  c_2(X,d)^2 = \max \left\{
\frac{\sum_{i,j=1 : Y_{ij} > 0}^n Y_{ij} d(x_i,x_j)^2}{-\sum_{i,j=1 :
    Y_{ij} < 0}^n Y_{ij} d(x_i,x_j)^2} :
Y \in \mathcal{S}^n_+, Y\mathbf{e} = 0
\right\}.
\end{equation}
The condition $Y\mathbf{e} = 0$ says that the all-ones vector
$\mathbf{e}$ lies in the kernel of $Y$. A proof of this result is
detailed in Matou\v{s}ek \cite{Matousek2002} or in Laurent, Vallentin \cite{LaurentV}.

This lower bound has been extensively used to determine the least
distortion Euclidean embeddings of the shortest path metric of several
graph classes. Linial, Magen~\cite{LinialM2000} computed least
distortion embeddings of products of cycles and of expander graphs.
Least distortion Euclidean embeddings of strongly regular graphs and
of more general distance regular graphs were first considered by
Vallentin~\cite{Vallentin2008b}. This was further extended by
Kobayashi, Kondo \cite{KobayashiK2015}, Cioab\u{a}, Gupta, Ihringer,
Kurihara \cite{CioabaGIK2021}. Linial, Magen, Naor
\cite{LinialMN2002} considered graphs of high girth using this
approach.

To apply the bound~\eqref{eq:sdp-dual-low-distortion} one has to
construct a matrix $Y$, which sometimes appears to come out of the
blue. By complementary slackness, which is the same as analyzing the
case of equality in the proof of weak duality, we get hints where to
search for an appropriate matrix $Y$: If $Y$ is an optimal solution of the maximization
problem~\eqref{eq:sdp-dual-low-distortion}, then $Y_{ij} > 0$ only for
the \emph{most contracted pairs}. These are pairs $(x_i, x_j)$ for
which $\frac{d(x_i,x_j)}{\|f(x_i) - f(x_j)\|}$ is
maximized. Similarly, then $Y_{ij} < 0$ only for the \emph{most
  expanded pairs}, maximizing
$\frac{\|f(x_i) - f(x_j)\|}{d(x_i,x_j)}$.

Linial, Magen~\cite{LinialM2000} realized that for graphs most
expanded pairs are simply adjacent vertices. However, most contracted
pairs are more mysterious and there is no characterization known. The
first intuition that the largest contraction occurs at pairs at maximum
distance is wrong in general.

\subsection{Contribution and structure of the paper}

In Section~\ref{sec:SDP} of this paper we derive a new
infinite-dimensional semidefinite program for determining a least
distortion embedding of flat tori into Hilbert spaces which is
analogous to~\eqref{eq:sdp-primal-low-distortion}. It is given in
Theorem~\ref{thm:primal-sym} where we additionally apply symmetry
reduction techniques in the spirit of~\cite{BachocGSV2012} to reduce
the original infinite-dimensional SDP into an infinite-dimensional
linear program that involves Fourier analysis. Then we realize that in
a Euclidean embedding of a flat torus there are no most expanded
pairs: The expansion is only attained in the limit by pairs whose
distance tends to zero. This is in perfect analogy to the graph case
where the most expanded pairs are also attained at minimal
distance. This insight has the advantage that in the infinite
dimensional linear program some of the infinitely many constraints can
be replaced by only one finite-dimensional semidefinite
constraint. This is the content of
Theorem~\ref{thm:primal-simple}. Its dual program is derived in
Theorem~\ref{thm:dual-simple} which is analogous
to~\eqref{eq:sdp-dual-low-distortion}.

In Section~\ref{sec:properties} we further investigate the properties
of the optimization problems given in Theorem~\ref{thm:primal-simple}
and Theorem~\ref{thm:dual-simple}. These properties will be
used in the next sections.

In the last sections we apply our new methodology. In
Section~\ref{sec:finite-dimensional-embedding} we prove that an
$n$-dimensional flat torus always admits a finite dimensional least
distortion em\-bedding, a space of (complex) dimension $2^{n}-1$
suffices. Section~\ref{sec:lower-bound} contains a new and simple
proof of our constant factor improvement of the lower bound given
in~\eqref{eq:HRbound}. In Section~\ref{sec:standard-torus} we show
that the standard embedding~\eqref{eq:standard-embedding} of the
standard torus is indeed optimal and has distortion $\pi/2$. We give
an optimal embedding of the lattices $D_n^*$ in
Section~\ref{sec:dnstar}. In Section~\ref{sec:2d-embeddings} we
determine least distortion embeddings of all two-dimensional flat
tori. Open questions are discussed in
Section~\ref{sec:open-questions}.

\section{An infinite-dimensional SDP}
\label{sec:SDP}

Starting from~\eqref{eq:sdp-primal-low-distortion} we want to derive a
similar, but now infinite-dimensional, semidefinite program which can
be used to determine $c_2(\R^n/L)$.

\subsection{Primal program}

The first step is to apply classical results on positive definite kernels\footnote{A kernel $Q$ is called positive
definite if and only if, for all $N \in \N$ and for all $x_1, \ldots, x_N \in \R^n / L$, the matrix
$(Q(x_i,x_j))_{1 \leq i,j \leq \N} \in \C^{N \times N}$ is Hermitian
and positive semidefinite. This naming convention is unfortunate but
for historical reasons unavoidable.} (which are attributed to Mercer \cite{Mercer1909a} and Moore \cite{Moore1916a}). This 
enables us to optimize over all embeddings $\varphi : \R^n / L \to H$ into some Hilbert space $H$. 

Let us provide some details about the relation between positive definite kernels and embeddings into Hilbert spaces.

Going from finite distortion embeddings to positive semidefinite kernels is easy: Suppose $\varphi : \R^n / L \to H$ is an embedding into some Hilbert space $H$ with $\dist(\varphi) < \infty$. Then $\varphi$ has finite expansion and, in particular, it is Lipschitz continuous. It follows that the kernel $Q(x,y) = (\varphi(x), \varphi(y))$, where $(\cdot , \cdot)$ denotes the inner product on $H$, is continuous and positive definite.

The converse is deeper: Suppose a continuous and positive kernel $Q : \R^n / L \times \R^n / L \to \C$ is given. Then, by a theorem of Mercer (see for instance Riesz and Sz.-Nagy \cite[$\S$ 98]{Riesz1990a} or Cucker and Smale \cite[Chapter III]{Cucker2001}), there exists a spectral decomposition of the form
\[
Q(x,y) = \sum_{k=1}^\infty \lambda_k \phi_k(x) \overline{\phi_k(y)},
\]
with real numbers $\lambda_k \geq 0$ and functions $\phi_k : \R^n / L \to \C$, where the series converges absolutely for each $x, y \in \R^n / L$ and uniformly on $\R^n / L \times \R^n / L$. Defining 
\[
\varphi : \R^n / L \to \ell^2 \quad \text{by} \quad x \mapsto (\sqrt{\lambda_k} \phi_k(x))_{k=1}^\infty,
\]
we obtain a continuous map into the Hilbert space $\ell^2$ of square summable sequences, which satisfies $Q(x,y) = (\varphi(x), \varphi(y))$ for all $x,y \in \R^n / L$.

Therefore we get
\[
  \begin{split}
  c_2(\R^n/L)^2 = \inf\{C \; : \; & C \in \R_+, Q \text{ continuous and positive definite kernel},\\
  &  \begin{array}{rcl} d_{\R^n/L}(x,y)^2 & \leq &  Q(x,x) - 2 \Re(Q(x,y)) + Q(y,y) \\
        & \leq & C d_{\R^n/L}(x,y)^2 \; \text{ for all } x,y \in \R^n/L\}.
   \end{array}
  \end{split}
\]
Here we scaled the embedding $\varphi$ which is defined through $Q$ so
that the contraction of $\varphi$ equals $1$. The real part $\Re(Q)$
of a positive definite kernel is positive definite again and we can
restrict to real-valued positive
definite kernels for determining $c_2(\R^n/L)$.

For the second step we apply a standard group averaging argument. If
$Q$ is a feasible solution for the minimization problem above, so is
its group average
\[
  \overline{Q}(x,y) = \frac{1}{\vol(\R^n/L)} \int_{\R^n/L} Q(x-z, y-z)
  \, dz.
\]
Thus, instead of minimizing over
continuous and positive definite kernels $Q$ it suffices to minimize over continuous,
real functions $f \colon \R^n/L \to \R$ which are positive definite,
i.e.\ the kernel $(x,y) \mapsto f(x - y)$ is continuous and positive definite; see
also the proof of Theorem 3.1 in the paper \cite{Aharoni1985a} by
Aharoni, Maurey, Mityagin.

A continuous, positive definite function $f \in L^{\infty}(\R^n/L)$ defines a positive linear functional on $L^1(\R^n/L)$ by
\[
\int\int g(x) \overline{g(y)} f(x-y) \, dx \, dy \geq 0 \quad \text{for all } g \in L^1(\R^n/L). 
\]
So, by the Banach-Alaoglu theorem, see for example Folland~\cite[Chapter 3.3]{Folland1995a}, 
for any $C$, the set 
\[
\{f \in L^{\infty}(\R^n/L) : f \text{ continuous and positive definite}, \|f\|_{\infty} \leq C\}
\]
is weak-$*$ compact. Therefore, a minimum distortion embedding always exists.

Note also that $(x,y) \mapsto d_{\R^n/L}(x,y)^2$ only depends on the
difference $x-y$. So we can replace $(x,y)$ by $(x-y,0)$ and we can
move $x-y$ by a lattice vector translation into the Voronoi cell
$V(L)$. Hence,
\[
  \begin{split}
  c_2(\R^n/L)^2 = \min\{C \; : \; & C \in \R_+, f \colon \R^n/L \to \R \text{
    continuous and positive definite},\\
&  |x|^2 \leq 2(f(0) - f(x)) \leq C
|x|^2 \; \text{ for all } x \in V(L)\}.
  \end{split}
\]

In the third step we parametrize continuous positive definite functions by
the Fourier coefficients using Bochner's theorem, cf.\
Folland~\cite[(4.18)]{Folland1995a}, which says that a continuous
function $f : \R^n / L \to \C$ is positive definite if and only if all its
Fourier coefficients
\[
\widehat{f}(u) = \int_{\R^n/L} f(x) e^{-2\pi i u^{\sf T} x} \, dx,
\]
with $u \in L^*$ are nonnegative and $\widehat{f}$ lies in
\[
  \ell^1(L^*) = \left\{ z \colon L^* \to \C : \sum_{u \in L^*} |z(u)| < \infty \right\}.
\]
Then if $f$ is real, continuous and
of positive definite we have the representation
\[
  f(x) = \sum_{u \in L^*} \widehat{f}(u) e^{2\pi i u^{\sf T} x},
\]
where the convergence is absolute and uniform, with $\widehat{f} \in
\ell^1(L^*)$, $\widehat{f}(u) \geq 0$ and $\widehat{f}(u) =
\widehat{f}(-u)$ for all $u \in L^*$. Thus,
\[
  f(x) = \sum_{u \in L^* } \widehat{f}(u) \cos(2\pi u^{\sf T} x).
\]
Writing $ f $ in this form, one can express $c_2(\R^n / L)^2$ as an infinite-dimensional
linear program:

\begin{theorem}
\label{thm:primal-sym}
  The least distortion Euclidean embedding of a flat torus $\R^n / L$
  is given by
  \begin{equation}
  \label{eq:primal}
  \begin{split}
  c_2(\R^n/L)^2 = \min\big\{C \; : \; & C \in \R_+, z \in \ell^1(L^*),
  z(u) = z(-u) \geq 0 \text{ for all } u \in L^*,\\
&  |x|^2 \leq 2 \sum_{u \in L^*} z(u) (1- \cos(2\pi u^{\sf T} x))\leq C
|x|^2\\
& \quad \text{ for all } x \in V(L)\big\}.
  \end{split}
\end{equation}
A feasible solution of the above minimization problem $(C,z)$
determines a Euclidean embedding $\varphi$ of $\R^n/L$ with distortion
$dist(\varphi) \leq \sqrt{C}$ by
\begin{equation}
  \label{eq:varphi-embedding}
   \varphi : \R^n/L \to \ell^2(L^*), \quad
    x  \mapsto \left(\sqrt{z(u)} e^{2\pi i u^{\sf T} x} \right)_{u \in L^*},
  \end{equation}
  with complex Hilbert space
\[
  \ell^2(L^*) = \left\{z : L^* \to \C : \left(\sum_{u \in L^*} |z(u)|^2 \right)^{1/2} < \infty\right\}.
\]
\end{theorem}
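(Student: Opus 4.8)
The plan is to assemble the chain of equalities worked out in the discussion preceding the statement and then to verify the assertion about the embedding $\varphi$. By the point where the theorem is stated, Moore's theorem, the group averaging argument and the restriction to the Voronoi cell have already reduced $c_2(\R^n/L)^2$ to the infimum of $C \in \R_+$ over continuous real functions $f\colon \R^n/L \to \R$ of positive type satisfying $|x|^2 \le 2(f(0) - f(x)) \le C|x|^2$ for all $x \in V(L)$. The first step is therefore only a change of variables: by Bochner's theorem the map $f \mapsto \widehat f$ is a bijection between continuous real positive type functions on $\R^n/L$ and sequences $z \in \ell^1(L^*)$ with $z(u) = z(-u) \ge 0$, under which $f(x) = \sum_{u \in L^*} z(u)\cos(2\pi u^\tp x)$, so that $2(f(0) - f(x)) = 2\sum_{u \in L^*} z(u)\bigl(1 - \cos(2\pi u^\tp x)\bigr)$. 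Substituting this identity into the program above produces \eqref{eq:primal}.

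For the second claim I would take a feasible pair $(C,z)$ and argue as follows. First, $\varphi$ in \eqref{eq:varphi-embedding} is well-defined as a map into $\ell^2(L^*)$: its $u$-th coordinate has squared modulus $z(u)$, so $\|\varphi(x)\|^2 = \sum_{u \in L^*} z(u) = \sum_{u \in L^*}|z(u)| < \infty$ because $z \in \ell^1(L^*)$, and $e^{2\pi i u^\tp x}$ descends to the quotient since $u^\tp v \in \Z$ for all $v \in L$. Next, $\|\varphi(x) - \varphi(y)\|^2 = \sum_{u \in L^*} z(u)\,\bigl|e^{2\pi i u^\tp x} - e^{2\pi i u^\tp y}\bigr|^2 = 2\sum_{u \in L^*} z(u)\bigl(1 - \cos(2\pi u^\tp(x-y))\bigr)$. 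Moving $x - y$ into $V(L)$ by a lattice translation changes no value $\cos(2\pi u^\tp\,\cdot)$ with $u \in L^*$, so the two feasibility inequalities give $d_{\R^n/L}(x,y)^2 \le \|\varphi(x) - \varphi(y)\|^2 \le C\,d_{\R^n/L}(x,y)^2$ for all $x,y \in \R^n/L$. Hence the contraction of $\varphi$ is at most $1$, the expansion at most $\sqrt C$, so $\dist(\varphi) \le \sqrt C$, and injectivity is immediate from the left inequality. As a byproduct this re-proves the inequality $c_2(\R^n/L)^2 \le C$ for every feasible $C$; the matching inequality — that any injective embedding, after scaling and averaging, yields a feasible $z$ with $C = \dist(\varphi)^2$ — is precisely the content of the reductions recalled above.

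The step that needs genuine care is the bookkeeping in the change of variables: one must apply Bochner's theorem in the right category — continuous functions on the compact abelian group $\R^n/L$, with absolutely summable, nonnegative, symmetric Fourier data — and check that $f \leftrightarrow z$ is a true bijection transporting the two-sided constraint without loss, so that the two infima coincide exactly. Everything after that is a direct computation with geometric sums of characters together with the triviality that translating by a lattice vector does not affect a character of the dual lattice; I do not expect any serious analytic obstacle, since the one delicate convergence point, continuity of the averaged kernel, was already settled before the statement.
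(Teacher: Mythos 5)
Your proposal is correct and takes essentially the same route as the paper: the reductions via Moore's theorem, group averaging, and Bochner's theorem that precede the statement constitute the paper's proof of \eqref{eq:primal}, and your change of variables $f \leftrightarrow z$ and the direct computation of $\|\varphi(x)-\varphi(y)\|^2 = 2\sum_{u\in L^*} z(u)(1-\cos(2\pi u^{\sf T}(x-y)))$ (together with the observation that adding a lattice vector to $x-y$ does not change a character of $L^*$) is exactly what is needed to verify the second claim, which the paper leaves implicit.
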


It is worth to mention that the embedding $\varphi$ of
Theorem~\ref{thm:primal-sym} embeds the flat torus $\R^n / L$ into a
direct product of circles
\[
  \prod_{u \in L^*} \sqrt{z(u)} S^1 \quad \text{with} \quad
  \|\varphi(x)\|^2 = \sum_{u \in L^*} z(u) \text{ for all } x \in L.
\]
The support of $z$ contains a basis of $L^*$ since the embedding is injective.
Using the fact $z(u) = z(-u)$ we could also use the real embedding
$\varphi'$ with
\[
  [\varphi'(x)]_u = \sqrt{z(u)} (\cos 2\pi u^{\sf T} x, \sin 2\pi u^{\sf
    T} x),
\]
where $u$ runs through $L^* / \{\pm 1\}$ and which has the same
distortion as $\varphi$.

On the other hand, the constraint $z(u) = z(-u)$ is clearly redundant
in the minimization problem of Theorem~\ref{thm:primal-sym}.

Sometimes, it is convenient to use the identity
\begin{equation}
\label{eq:double-angle}
2 \sum_{u \in L^*} z(u) (1 - \cos(2\pi u^{\sf T} x)) = 4 \sum_{u \in L^*} z(u) \sin(\pi u^{\sf T} x)^2,
\end{equation}
which follows from the cosine double angle formula $1 - \cos(\alpha) = 2\sin(\alpha/2)^2$.

\medskip

Now we want to simplify the infinitely many inequalities
  \begin{equation}
 \label{eq:contraction-inequality}
2  \sum_{u\in L^*} z(u) (1-\cos(2\pi u^{\sf T} x)) \leq C |x|^2 \text{ for all } x \in V(L),
\end{equation}
which occur in~\eqref{eq:primal}, by only \textit{one}
finite-dimensional semidefinite condition. For this we observe that in any embedding there are no most expanded pairs: the
corresponding supremum
$\sup\left\{\frac{\|\varphi(x) -
  \varphi(y)\|}{d_{\R^n/L}(x,y)} : x,y \in \R^n / L,  x \neq y \right\}$ is only attained by a limit of pairs
  whose distance tends to $0$.

\begin{lemma}
  Let $L \subseteq \R^n$ be an $n$-dimensional lattice. Let $(C,z)$ be
  as in~\eqref{eq:primal}. Inequality
  \eqref{eq:contraction-inequality} is satisfied if and only if
\begin{equation}
   \label{eq:contraction-inequality-simple}
    4 \pi^2 \sum_{u\in L^*} z(u) (u^{\sf T} x)^2 \le  C |x|^2 \text{ for all }  x \in \R^n.
\end{equation}
\end{lemma}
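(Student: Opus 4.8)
The plan is to show that (\ref{eq:contraction-inequality}) and (\ref{eq:contraction-inequality-simple}) both say the same thing about a single quadratic form, namely that its matrix is dominated by $\frac{C}{2}I$, and that the passage between the two inequalities is governed by the elementary estimate
\[
  \frac{2}{\pi^2}\,t^2 \;\le\; 1-\cos t \;\le\; \frac{1}{2}\,t^2
  \qquad\text{for } |t|\le\pi,
\]
together with the scaling behaviour of the trigonometric sum under $x\mapsto \lambda x$. Concretely, write $g(x) = 2\sum_{u\in L^*} z(u)(1-\cos(2\pi u^{\sf T}x))$ and $q(x) = 4\pi^2\sum_{u\in L^*} z(u)(u^{\sf T}x)^2$; note that $q$ is a genuine (finite or convergent) nonnegative quadratic form in $x$ since $z\in\ell^1(L^*)$ and $z\ge 0$, with associated symmetric matrix $M = 4\pi^2\sum_{u} z(u)\,uu^{\sf T}$. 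The claim is that $g(x)\le C|x|^2$ for all $x\in V(L)$ is equivalent to $q(x)\le C|x|^2$ for all $x\in\R^n$, i.e.\ to $M \preceq C I$.

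First I would prove the direction (\ref{eq:contraction-inequality-simple}) $\Rightarrow$ (\ref{eq:contraction-inequality}): for $x\in V(L)$ and any $u\in L^*$, the product $u^{\sf T}x$ need not be small, so one cannot apply $1-\cos t\le t^2/2$ termwise at the argument $2\pi u^{\sf T}x$ directly. Instead I would use the standard half-angle identity $1-\cos\theta = 2\sin^2(\theta/2)$ and the bound $|\sin s|\le |s|$ with no restriction on $s$, which gives $1-\cos(2\pi u^{\sf T}x) = 2\sin^2(\pi u^{\sf T}x)\le 2\pi^2 (u^{\sf T}x)^2$; summing against $2z(u)$ yields $g(x)\le q(x)\le C|x|^2$. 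This direction in fact holds for all $x\in\R^n$, not just on $V(L)$, and needs nothing about the Voronoi cell.

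For the converse (\ref{eq:contraction-inequality}) $\Rightarrow$ (\ref{eq:contraction-inequality-simple}), the idea is to exploit that $g$ and $q$ agree to second order at the origin and to extract the Hessian of $g$ at $0$ by a scaling/limiting argument that stays inside $V(L)$. Fix any $x\in\R^n$; since $V(L)$ is a neighbourhood of $0$ (it contains a ball of radius $\tfrac12\lambda(L)$), for all sufficiently small $\lambda>0$ we have $\lambda x\in V(L)$, so the hypothesis gives $g(\lambda x)\le C\lambda^2|x|^2$. Divide by $\lambda^2$ and let $\lambda\to 0$: using $\lim_{\lambda\to 0}\frac{1-\cos(2\pi\lambda\,u^{\sf T}x)}{\lambda^2} = 2\pi^2(u^{\sf T}x)^2$ and dominated convergence on $\ell^1(L^*)$ (the integrand is bounded by $2\pi^2(u^{\sf T}x)^2 z(u)$ uniformly in $\lambda$, via the $|\sin s|\le|s|$ bound above, and $\sum_u z(u)(u^{\sf T}x)^2 \le |x|^2\sum_u z(u)\|u\|^2$, which is finite since only finitely many $z(u)$ are relevant — actually one should be slightly careful here and first note $\sum_u z(u)\|u\|^2 <\infty$ follows because the constraint forces $M\preceq CI$ on the support, or more simply because $z$ has finite support when $C$ is finite; in the general $\ell^1$ case one invokes the same $1-\cos\le$ quadratic bound to dominate). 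The limit of the left side is exactly $q(x)$, so $q(x)\le C|x|^2$, as desired.

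The main obstacle is the exchange of limit and (possibly infinite) sum in the converse direction: one must produce a $\lambda$-independent summable majorant for $\frac{1-\cos(2\pi\lambda u^{\sf T}x)}{\lambda^2}\,z(u)$. The bound $1-\cos\theta\le\tfrac12\theta^2$ gives the majorant $2\pi^2(u^{\sf T}x)^2 z(u) \le 2\pi^2|x|^2\|u\|^2 z(u)$, so the remaining point is that $\sum_{u\in L^*}\|u\|^2 z(u)<\infty$. I would argue this finiteness by first applying (\ref{eq:contraction-inequality}) on a small ball and the termwise lower bound $1-\cos(2\pi u^{\sf T}x)\ge \tfrac{2}{\pi^2}(2\pi u^{\sf T}x)^2 = 8(u^{\sf T}x)^2$ valid once $2\pi|u^{\sf T}x|\le\pi$, i.e.\ $|u^{\sf T}x|\le\tfrac12$; integrating such inequalities over a small cube (or summing over $x$ ranging in a frame) bounds $\sum_u z(u)\|u\|^2$ by a constant multiple of $C$, closing the loop. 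Everything else is the two elementary trigonometric estimates above plus the observation that a nonnegative quadratic form is $\le C|\cdot|^2$ on all of $\R^n$ iff its matrix is $\preceq CI$, which is what makes this one matrix inequality (hence ``one finite-dimensional semidefinite condition'') equivalent to the original continuum of constraints.
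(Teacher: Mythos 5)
Your proof takes the same two-pronged route as the paper: termwise $1-\cos\theta = 2\sin^2(\theta/2) \le \theta^2/2$ for one direction, and a scale-to-zero limit recovering the second-order behaviour of $g$ at the origin for the other. The paper phrases the second step as a ``Taylor expansion'' of $f(r) = g(rx^*) - C|rx^*|^2$ and silently assumes the coefficient $4\pi^2\sum_u z(u)(u^{\sf T}x^*)^2$ is a well-defined real number; you rightly notice that with only $z\in\ell^1(L^*)$ in hand this interchange of limit and sum is not automatic.

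However, your proposed patch in the last paragraph does not work as written and is also more than is needed. The claim that integrating the lower bound $1-\cos(2\pi u^{\sf T}x)\ge 8(u^{\sf T}x)^2$ over a small cube bounds $\sum_u z(u)\|u\|^2$ by a multiple of $C$ fails because that lower bound holds only on $\{x : |u^{\sf T}x|\le 1/2\}$, and as $|u|\to\infty$ a shrinking fraction of any fixed cube satisfies it, so the integrated quantity you actually control is $\sum_u z(u)\bigl(1-\prod_i\operatorname{sinc}(\pi u_i\epsilon)\bigr)$, whose $u$-dependent factor is bounded rather than growing like $|u|^2$. To recover $\sum_u z(u)\|u\|^2$ you would have to divide by $\epsilon^2$ and pass to the limit, i.e.\ you would be back at precisely the interchange you were trying to justify. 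The cleaner fix is to drop the global second-moment claim altogether and observe that since each summand $z(u)\bigl(1-\cos(2\pi\lambda u^{\sf T}x^*)\bigr)/\lambda^2$ is nonnegative, Fatou's lemma gives
\[
\liminf_{\lambda\to 0}\frac{g(\lambda x^*)}{\lambda^2}\;\ge\;4\pi^2\sum_{u\in L^*}z(u)(u^{\sf T}x^*)^2
\]
with no majorant required and regardless of whether the right-hand side is finite. That inequality, combined with $\lambda x^*\in V(L)$ for small $\lambda$, is exactly what the contrapositive needs, and it also shows (after the fact) that feasibility of $(C,z)$ forces $\sum_u z(u)\|u\|^2<\infty$. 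So your proof is structurally correct and mirrors the paper's, but the auxiliary finiteness argument should be replaced by this one-line application of Fatou.
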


Note that \eqref{eq:contraction-inequality-simple} holds for all $x \in \R^n$.

\begin{proof}
Using identity \eqref{eq:double-angle} and by the inequality $ |\sin(\alpha)| \leq
|\alpha|$ we have
  \[
     4 \sum_{u\in L^*} z(u) \sin(\pi u^{\sf T} x) \leq 4\pi^2
      \sum_{u \in L^*} z(u) (u^{\sf T} x)^2.
 \]
Thus, \eqref{eq:contraction-inequality-simple} implies
 \eqref{eq:contraction-inequality}.

Conversely, assume that~\eqref{eq:contraction-inequality-simple}
is not satisfied. There exists $x^* \in \R^n$ with
\begin{equation*}
  4 \pi^2 \sum\limits_{u\in L^*} z(u) (u^{\sf T} x^*)^2 >  C|x^*|^2.
\end{equation*}
For $r \geq 0$ define the function
\[
  f(r) =   2 \sum_{u\in L^*} z(u) (1-\cos(2\pi u^{\sf T} (rx^*))) - C|rx^*|^2
\]
and consider its Taylor expansion
\[
  f(r) = \left( 4\pi^2 \sum\limits_{u\in L^*} z(u) (u^{\sf T}
    x^*)^2  - C|x^*|^2\right) r^2 + o(r^2).
\]
Writing $ f $ this way and using the assumption, $f(r)$ is positive for
sufficiently small $r$. Thus, \eqref{eq:contraction-inequality} is not
satisfied.
\end{proof}

Inequality \eqref{eq:contraction-inequality-simple} can also be
rewritten as an inequality of the largest eigenvalue $\lambda_{\max}$
of a corresponding matrix
\[
  \lambda_{\max} \left(   4 \pi^2 \sum\limits_{u\in L^*} z(u) uu^{\sf
      T} \right) \leq C
\]
or equivalently as a semidefinite condition
\[
  C I - 4 \pi^2 \sum\limits_{u\in L^*} z(u) uu^{\sf
      T}  \in \mathcal{S}_+^n,
  \]
  where $I$ denotes the identity matrix. With this lemma we
  arrive at the following simplification of~\eqref{eq:primal}.

  \begin{theorem}
  \label{thm:primal-simple}
  The least distortion Euclidean embedding of a flat torus $\R^n / L$
  is given by
  \begin{equation}
  \label{eq:primal-simple}
  \begin{split}
  c_2(\R^n/L)^2 = \min\big\{C \; : \; & C \in \R_+, z \in \ell^1(L^*),
  z(u) = z(-u) \geq 0 \text{ for all } u \in L^*,\\
  &  |x|^2 \leq 2 \sum_{u \in L^*} z(u) (1- \cos(2\pi u^{\sf T} x))
  \text{ for all } x \in V(L),\\
  & CI - 4\pi^2 \sum_{u \in L^*} z(u) uu^{\sf T}\in \mathcal{S}^n_+ \big\}.
  \end{split}
\end{equation}
\end{theorem}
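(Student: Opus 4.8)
The plan is to combine Theorem~\ref{thm:primal-sym} with the Lemma that immediately precedes this statement. Theorem~\ref{thm:primal-sym} already expresses $c_2(\R^n/L)^2$ as the infimum of $C$ over $(C,z)$ with $z\in\ell^1(L^*)$, $z(u)=z(-u)\ge 0$, subject to the two families of inequalities $|x|^2\le 2\sum_{u\in L^*}z(u)(1-\cos(2\pi u^{\sf T}x))$ and $2\sum_{u\in L^*}z(u)(1-\cos(2\pi u^{\sf T}x))\le C|x|^2$, both ranging over $x\in V(L)$. So the only thing to check is that, for a fixed feasible $z$, the second family of inequalities \eqref{eq:contraction-inequality} can be replaced by the single semidefinite constraint $CI-4\pi^2\sum_{u\in L^*}z(u)uu^{\sf T}\in\mathcal{S}^n_+$, without changing the set of attainable $C$, hence without changing the infimum.

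First I would invoke the Lemma: for $(C,z)$ as in \eqref{eq:primal}, inequality \eqref{eq:contraction-inequality} over $x\in V(L)$ is equivalent to \eqref{eq:contraction-inequality-simple}, namely $4\pi^2\sum_{u\in L^*}z(u)(u^{\sf T}x)^2\le C|x|^2$ for all $x\in\R^n$. Note that although \eqref{eq:contraction-inequality} is stated only for $x\in V(L)$ in \eqref{eq:primal}, the Lemma upgrades it to an inequality valid for \emph{all} $x\in\R^n$; the $V(L)$ versus $\R^n$ discrepancy is harmless because the left-hand side of \eqref{eq:contraction-inequality-simple} is a quadratic form and homogeneity of degree two in $x$ lets one scale any $x\in\R^n$ into $V(L)$ — this is exactly what the proof of the Lemma exploits via the Taylor expansion argument. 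Then I would record the elementary reformulation, already spelled out in the paragraph following the Lemma: writing $M=4\pi^2\sum_{u\in L^*}z(u)uu^{\sf T}$, the statement ``$x^{\sf T}Mx\le C|x|^2$ for all $x\in\R^n$'' is by definition of $\lambda_{\max}$ equivalent to $\lambda_{\max}(M)\le C$, which in turn is equivalent to $CI-M\succeq 0$, i.e. $CI-M\in\mathcal{S}^n_+$. (The matrix $M$ is well-defined and finite since $z\in\ell^1(L^*)$ and the series $\sum_u z(u)uu^{\sf T}$ converges in operator norm on the finite-dimensional space $\R^n$ — indeed its trace is $4\pi^2\sum_u z(u)|u|^2$, but even absolute convergence of $\sum z(u)$ together with boundedness on the relevant directions suffices; one can also note $M$ is automatically positive semidefinite since each $uu^{\sf T}\succeq 0$ and $z(u)\ge 0$.)

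Putting these together: the feasible region of \eqref{eq:primal} with the second inequality family replaced by $CI-4\pi^2\sum_{u\in L^*}z(u)uu^{\sf T}\in\mathcal{S}^n_+$ is \emph{exactly} the feasible region of \eqref{eq:primal}, pair by pair. Hence the two infima coincide, and by Theorem~\ref{thm:primal-sym} both equal $c_2(\R^n/L)^2$. This gives \eqref{eq:primal-simple}.

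I do not anticipate a genuine obstacle here — the statement is essentially a repackaging of Theorem~\ref{thm:primal-sym} plus the Lemma plus the spectral reformulation. The one point that deserves a sentence of care is the convergence and finiteness of $M=4\pi^2\sum_{u\in L^*}z(u)uu^{\sf T}$ as a legitimate matrix, because a priori $\ell^1$-summability of $z$ does not by itself bound $\sum_u z(u)|u|^2$. However, this is not actually needed for the equivalence: the Lemma's conclusion \eqref{eq:contraction-inequality-simple} only asserts the scalar inequality for each fixed $x$, which is meaningful as soon as each series $\sum_u z(u)(u^{\sf T}x)^2$ either converges or diverges (in the latter case the inequality fails for that $x$ unless $C=\infty$), and the semidefinite condition $CI-M\succeq 0$ is then read as: for every $x$, $4\pi^2\sum_u z(u)(u^{\sf T}x)^2\le C x^{\sf T}x$, which forces the series to converge whenever $C<\infty$ and is the standard way to interpret $M$ as a bounded self-adjoint operator. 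So the ``main step'' is really just citing the Lemma and the spectral identity $\lambda_{\max}(M)\le C\iff CI-M\succeq 0$.
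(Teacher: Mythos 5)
Your proposal is correct and follows exactly the route the paper intends: Theorem~\ref{thm:primal-simple} is obtained from Theorem~\ref{thm:primal-sym} by applying the preceding Lemma to replace the infinitely many expansion inequalities by \eqref{eq:contraction-inequality-simple}, and then using the elementary spectral reformulation $\lambda_{\max}(4\pi^2\sum_u z(u)uu^{\sf T})\le C\iff CI-4\pi^2\sum_u z(u)uu^{\sf T}\in\mathcal{S}^n_+$. Your added remark about the a~priori finiteness of $\sum_u z(u)|u|^2$ is a sensible point of care (and is indeed forced once the semidefinite constraint holds with finite $C$), but it does not change the argument.
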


\subsection{Dual program}

We derive the dual of~\eqref{eq:primal-simple} to systematically find
lower bounds for $c_2(\R^n / L)$.

\begin{theorem}
\label{thm:dual-simple}
Suppose that $(C, z)$ is feasible
for~\eqref{eq:primal-simple}, then
\begin{equation}
\label{eq:dual}
\begin{split}
C \geq c_2(\R^n / L)^2 \geq \sup \big\{ & 2 \pi^2 \int_{V(L)} |x|^2 \, d\nu(x) \; : \\
& \quad \nu \in \mathcal{M}_+(V(L)), Y \in \mathcal{S}^n_+, \Tr(Y) = 1,\\
& \quad \int_{V(L)} (1-\cos(2\pi u^{\sf T} x)) \, d\nu(x) \leq u^{\sf T}
Y u\\
& \qquad \text{ for all } u \in L^*  \big\},
\end{split}
\end{equation}
where $\mathcal{M}_+(V(L))$ is the cone of Borel measures supported on
$V(L)$. In~\eqref{eq:dual} equality holds for a feasible $(\nu,Y)$
if and only if
\[
  \left(C I - 4 \pi^2 \sum_{u\in L^*}z(u) uu^{\sf T}\right) Y =
   0,
\]
and the measure $\nu$ is only supported on vectors $x \in V(L)$ for which equality
\[
  |x|^2 = 2 \sum_{u \in L^*} z(u) (1- \cos(2\pi u^{\sf T} x))
\]
holds, and for all vectors $u \in L^*$ with $z(u) \neq 0$ we have
\[
\int_{V(L)} (1-\cos(2\pi u^{\sf T} x)) \, d\nu(x) = u^{\sf T}
   Y u.
 \]
\end{theorem}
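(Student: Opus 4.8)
The plan is to establish \eqref{eq:dual} by proving weak duality directly, rather than invoking an abstract conic--duality theorem. Fix $(C,z)$ feasible for \eqref{eq:primal-simple} and an arbitrary $(\nu,Y)$ feasible for the maximization problem in \eqref{eq:dual}, and chain the four constraints of the two problems into one inequality. First, pairing the semidefinite constraint $CI-4\pi^2\sum_{u\in L^*}z(u)uu^{\sf T}\in\mathcal S^n_+$ with $Y\in\mathcal S^n_+$ via the trace inner product and using $\Tr(Y)=1$ gives $C\ge 4\pi^2\sum_{u\in L^*}z(u)\,u^{\sf T}Yu$. Second, since $z(u)\ge 0$, the dual feasibility inequalities $\int_{V(L)}(1-\cos(2\pi u^{\sf T}x))\,d\nu(x)\le u^{\sf T}Yu$ may be substituted summand by summand. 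Third, integrating the primal contraction inequality $|x|^2\le 2\sum_u z(u)(1-\cos(2\pi u^{\sf T}x))$, valid on $V(L)$, against $\nu$ and interchanging sum and integral gives $2\sum_u z(u)\int_{V(L)}(1-\cos(2\pi u^{\sf T}x))\,d\nu(x)\ge\int_{V(L)}|x|^2\,d\nu(x)$. Concatenating the three inequalities yields
\[
  C\ \ge\ 4\pi^2\sum_{u\in L^*}z(u)\,u^{\sf T}Yu\ \ge\ 4\pi^2\sum_{u\in L^*}z(u)\!\int_{V(L)}\!(1-\cos(2\pi u^{\sf T}x))\,d\nu(x)\ \ge\ 2\pi^2\!\int_{V(L)}\!|x|^2\,d\nu(x).
\]
Taking the infimum over primal-feasible $(C,z)$, and then the supremum over dual-feasible $(\nu,Y)$, gives the asserted chain $C\ge c_2(\R^n/L)^2\ge\sup\{\dots\}$.

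The only step that is not purely algebraic is the interchange of summation and integration, which I would justify with Tonelli's theorem applied to the nonnegative function $(u,x)\mapsto z(u)(1-\cos(2\pi u^{\sf T}x))$ on $L^*\times V(L)$: since $z(u)\ge 0$ and $0\le 1-\cos(2\pi u^{\sf T}x)\le 2$, the iterated sum--integral and integral--sum have a common value in $[0,\infty]$, and this value is finite because $z\in\ell^1(L^*)$ and $\nu$ is a finite Borel measure on the compact set $V(L)$ (finiteness of $\nu$ may be assumed and in fact follows from feasibility with the finite matrix $Y$, using that $\sum_{i=1}^n(1-\cos(2\pi u_i^{\sf T}x))$ for a basis $u_1,\dots,u_n$ of $L^*$ is bounded below by a positive multiple of $|x|^2$ near the origin and bounded away from $0$ on $V(L)\setminus\{0\}$, which contains no nonzero lattice point). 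This is the only, and rather mild, technical obstacle; everything else is bookkeeping.

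For the equality characterization I would determine when each of the three inequalities above is an equality. The first is tight iff $\Tr\big((CI-4\pi^2\sum_u z(u)uu^{\sf T})Y\big)=0$; as both matrices are positive semidefinite this is equivalent to $(CI-4\pi^2\sum_u z(u)uu^{\sf T})Y=0$, by the standard fact that $\Tr(AB)=0$ with $A,B\succeq 0$ forces $AB=0$ (write $A^{1/2}BA^{1/2}\succeq 0$ with zero trace, so $A^{1/2}B^{1/2}=0$, hence $AB=0$). The second is a sum of the nonnegative numbers $z(u)\big(u^{\sf T}Yu-\int_{V(L)}(1-\cos(2\pi u^{\sf T}x))\,d\nu(x)\big)$, so it is tight iff each vanishes, i.e.\ iff $\int_{V(L)}(1-\cos(2\pi u^{\sf T}x))\,d\nu(x)=u^{\sf T}Yu$ for every $u$ with $z(u)\ne 0$. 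The third is the integral against $\nu\ge 0$ of the function $2\sum_u z(u)(1-\cos(2\pi u^{\sf T}x))-|x|^2$, which is $\ge 0$ on $V(L)$ by primal feasibility, so it is tight iff this function vanishes $\nu$-almost everywhere, i.e.\ iff $\nu$ is supported on the set of $x\in V(L)$ with $|x|^2=2\sum_u z(u)(1-\cos(2\pi u^{\sf T}x))$. These three conditions coincide with those in the statement, and equality throughout the chain $C\ge c_2(\R^n/L)^2\ge 2\pi^2\int_{V(L)}|x|^2\,d\nu(x)$ holds exactly when all three hold simultaneously.
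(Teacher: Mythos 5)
Your proof is correct and follows essentially the same weak-duality argument as the paper: pair the semidefinite constraint with $Y$ via the trace inner product using $\Tr(Y)=1$, integrate the contraction inequality against $\nu$ and swap sum and integral, and invoke $z\geq 0$ together with dual feasibility, with the three tightness conditions read off from the three inequalities. The only difference is cosmetic (you present the estimate as a chain $C\geq\cdots\geq 2\pi^2\int|x|^2\,d\nu$, the paper as a single difference that is shown to be a nonnegative sum) plus your somewhat more explicit Tonelli and complementary-slackness bookkeeping, which the paper leaves to the reader.
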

\begin{proof}
For two symmetric matrices $ A,B $ we define $ \langle A, B \rangle = \Tr(AB) $.
 Using the feasibility of $(C,z)$ and $(\nu, Y)$ we get
\begin{align*}
   & C - 2\pi^2 \int_{V(L)} |x|^2 \, d\nu(x) \\
   \geq \; & \left\langle 4 \pi^2 \sum_{u \in L^*} z(u) u u^{\sf T}, Y
             \right\rangle - 4\pi^2 \int_{V(L)} \sum_{u\in L^*} z(u) (1-\cos(2\pi u^{\sf T} x)) \, d\nu (x) \\
   = \; & 4 \pi^2 \sum_{u \in L^*} z(u) \left( \langle uu^{\sf T}, Y \rangle -
          \int_{V(L)} (1 - \cos(2\pi u^{\sf T} x)) \, d \nu(x) \right)\\
  \geq \; & 0.
\end{align*}
When analyzing the case of equality we find the three conditions of
the theorem.
\end{proof}

\begin{remark}
As a side note, we would like to mention that strong duality between \eqref{eq:primal-simple} and \eqref{eq:dual} holds. In particular, the supremum in \eqref{eq:dual} is attained. This can be derived using \cite[(7.2) Theorem]{Barvinok2002} (taking \eqref{eq:dual} as the primal problem in canonical form) in combination with \cite[(7.3) Lemma]{Barvinok2002}.
\end{remark}

\section{Properties and observations}
\label{sec:properties}

We collect some results that are consequences of the primal and dual
formulation of the preceding section, including some auxiliary results
used in later sections.

\subsection{Subquadratic inequality}

First, we show that the functions of the form
\begin{equation}
\label{eq:f-function}
  f(x) = 2 \sum_{u \in L^*} z(u) (1 - \cos(2\pi u^{\sf T} x)) \text{
    with } z(u) \geq 0
\end{equation}
are subquadratic, this auxiliary result is going to be used a number of times. Note that we have
\[
  f(x-y) = \|\varphi(x) - \varphi(y)\|^2
\]
for the embedding $\varphi$ in \eqref{eq:varphi-embedding}. Suppose
for a moment that $\varphi$ was an
isometry, then $f$ would satisfy the parallelogram law
\[
  f(x-y) + f(x+y) = 2 f(x) + 2 f(y)
\]
and it would be a homogeneous quadratic form
\[
  f(\lambda x) = \lambda^2 f(x).
\]
However, $\varphi$ cannot be a Hilbert space isometry, but the next
lemma shows that we have at least two inequalities.

\begin{lemma}
  \label{lemma:sub1}
  The function $f$ defined in \eqref{eq:f-function} is subquadratic, i.e.\ it
  satisfies
  \begin{equation}
\label{eq:subquadratic-inequality}
 f(x+y) + f(x-y) \leq 2f(x) + 2f(y) \quad \text{for all } x,y\in \R^n.
\end{equation}
Furthermore,
\begin{equation}
  \label{eq:subquadratic-inequality2}
    f(\lambda x) \le \lambda^2f(x) \quad  \text{for all } \lambda \in
    \N, x\in \R.
\end{equation}
We have equality in \eqref{eq:subquadratic-inequality} if and only for every $u \in \supp(z)$, $u^{\sf T}x \in \Z$ or $u^{\sf T} y \in \Z$.
\end{lemma}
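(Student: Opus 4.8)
The plan is to reduce both inequalities to the single term $g_u(x) = 1 - \cos(2\pi u^{\sf T} x)$, since $f$ is a nonnegative combination $f = 2\sum_u z(u) g_u$ with $z(u) \ge 0$, and a pointwise inequality among the $g_u$'s integrates (sums) to the corresponding inequality for $f$. So it suffices to prove, for each fixed $u \in L^*$,
\begin{equation*}
  g_u(x+y) + g_u(x-y) \le 2 g_u(x) + 2 g_u(y) \quad \text{and} \quad g_u(\lambda x) \le \lambda^2 g_u(x) \text{ for } \lambda \in \N.
\end{equation*}
Writing $s = 2\pi u^{\sf T} x$ and $t = 2\pi u^{\sf T} y$, the first reduces to the elementary trigonometric inequality $(1 - \cos(s+t)) + (1 - \cos(s-t)) \le 2(1-\cos s) + 2(1-\cos t)$. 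Expanding the left side with the sum-to-product (or angle-addition) formula gives $2 - 2\cos s \cos t$, so the claim is $2 - 2\cos s\cos t \le 4 - 2\cos s - 2\cos t$, i.e.\ $0 \le (1-\cos s)(1-\cos t)$, which is obviously true since each factor is nonnegative. That settles \eqref{eq:subquadratic-inequality}.

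For \eqref{eq:subquadratic-inequality2} I would argue by induction on $\lambda \in \N$. The base case $\lambda = 1$ is trivial. For the step, apply the already-proved parallelogram-type inequality \eqref{eq:subquadratic-inequality} with the pair $(\lambda x, x)$ to get $f((\lambda+1)x) \le 2 f(\lambda x) + 2 f(x) - f((\lambda - 1)x)$, and feed in the inductive bounds $f(\lambda x) \le \lambda^2 f(x)$ and $f((\lambda-1)x) \ge 0$; this yields $f((\lambda+1)x) \le (2\lambda^2 + 2) f(x)$, which is too weak. A cleaner route is to work at the level of a single $g_u$: $1 - \cos(\lambda s) \le \lambda^2 (1 - \cos s)$ follows from $|\sin(\theta)| \le |\theta|$ together with $1 - \cos\alpha = 2\sin^2(\alpha/2)$, giving $1 - \cos(\lambda s) = 2\sin^2(\lambda s/2) \le 2(\lambda s/2)^2 = \lambda^2 s^2/2$; but this overshoots $1-\cos s$ for large $s$. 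The correct elementary bound is via the Chebyshev-polynomial identity or, more simply, the telescoping estimate $1 - \cos(\lambda s) = \sum_{k=0}^{\lambda-1}\big(\cos(ks) - \cos((k+1)s)\big)$ combined with $\cos(ks) - \cos((k+1)s) = 2\sin\!\big(\tfrac{(2k+1)s}{2}\big)\sin\!\big(\tfrac s2\big)$; I would instead prefer the induction $1 - \cos(\lambda s) \le \lambda^2(1-\cos s)$ which is equivalent to $f(\lambda x) \le \lambda^2 f(x)$ term by term — here one uses that $1 - \cos$ is subquadratic as a function of $\lambda$, which is exactly the $n=1$, $u$-fixed instance of what we are proving, so the induction closes using only \eqref{eq:subquadratic-inequality} applied to the scalar sequence $a_k = 1 - \cos(ks)$, whose discrete second difference $a_{k+1} - 2a_k + a_{k-1} = 2(1-\cos s)\cos(ks) - \text{(small)}$... the clean statement is that $a_{k+1} + a_{k-1} \le 2a_k + 2a_1$ holds, which is the $(x,y) = (kx, x)$ case already established, and summing this from $k=1$ to $\lambda - 1$ telescopes to $a_\lambda \le \lambda^2 a_1$. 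This is the argument I would write out.

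Finally, for the equality characterization: tracing back through the reduction, equality in \eqref{eq:subquadratic-inequality} for $f$ forces equality in $0 \le (1-\cos s)(1-\cos t)$ for every $u$ in the support of $z$, i.e.\ for each such $u$ either $\cos(2\pi u^{\sf T} x) = 1$ or $\cos(2\pi u^{\sf T} y) = 1$, meaning $u^{\sf T} x \in \Z$ or $u^{\sf T} y \in \Z$. Since $\varphi$ is an embedding, the support of $z$ contains a basis of $L^*$ (as remarked after Theorem~\ref{thm:primal-sym}); one then argues that if $x \notin L$ there is some basis vector $u$ with $u^{\sf T} x \notin \Z$, forcing $u^{\sf T} y \in \Z$ for all those $u$ and hence $y \in L$ — more carefully, the set of $u$ with $u^{\sf T} x \notin \Z$ is nonempty and ``large enough'' in $\supp z$ that the complementary condition pins $y$ to $L$; I expect this bookkeeping — showing the two conditions cannot be simultaneously avoided off the lattice — to be the main obstacle, and it is handled by using that $\supp z$ spans $L^*$ over $\Z$. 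The equality analysis for \eqref{eq:subquadratic-inequality2} is analogous and in fact follows from the equality case of \eqref{eq:subquadratic-inequality} along the telescoped chain, since equality in the sum forces equality in each summand, i.e.\ in each $(kx, x)$ instance, which by the above means $kx \in L$ or $x \in L$ for the relevant $u$'s, and hence (taking $k=1$) $x \in L$.
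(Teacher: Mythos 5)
Your proof of \eqref{eq:subquadratic-inequality} matches the paper's: both reduce to the single-frequency inequality $1-\cos(\alpha+\beta) + 1 - \cos(\alpha-\beta) \le 2(1-\cos\alpha) + 2(1-\cos\beta)$ with $\alpha = 2\pi u^{\sf T}x$, $\beta = 2\pi u^{\sf T}y$; you observe that after expansion this is $(1-\cos\alpha)(1-\cos\beta) \ge 0$, while the paper writes the left side as $2\cos\beta(1-\cos\alpha) + 2(1-\cos\beta)$ and bounds $\cos\beta\le 1$ --- the same fact, differently packaged. For \eqref{eq:subquadratic-inequality2} you take a genuinely different route. The paper argues by binary splitting: for even $\lambda$ it applies \eqref{eq:subquadratic-inequality} to the pair $((\lambda/2)x,(\lambda/2)x)$; for odd $\lambda\ge 3$ it splits $\lambda x$ into $\tfrac{\lambda+1}{2}x$ and $\tfrac{\lambda-1}{2}x$ and inducts. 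You instead sum the second-difference inequalities $a_{k+1}+a_{k-1}\le 2a_k+2a_1$ (with $a_k:=f(kx)$, the $(kx,x)$ instance of \eqref{eq:subquadratic-inequality}) over $k$, which avoids the parity split. The one imprecision is the phrase ``telescopes to $a_\lambda\le\lambda^2 a_1$'': the unweighted sum over $k=1,\dots,\lambda-1$ only cancels to $a_\lambda\le a_{\lambda-1}+(2\lambda-1)a_1$, so a one-line induction is still required; alternatively, weight the $k$th inequality by $\lambda-k$ and use $a_0=0$ to make the telescope exact. Either way the route is valid.

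For the equality characterization you flag the bookkeeping as ``the main obstacle,'' and the concern is warranted: your sketch does not close, and in fact the claim is false as written --- a problem the paper's proof shares. Equality in \eqref{eq:subquadratic-inequality} forces, for each $u\in\supp z$, the per-$u$ disjunction ``$u^{\sf T}x\in\Z$ or $u^{\sf T}y\in\Z$,'' and this does \emph{not} imply the global disjunction ``$x\in L$ or $y\in L$'' even when $\supp z$ contains a basis of $L^*$. Take $L=\Z^2$ with the standard embedding (so $\supp z=\{\pm e_1,\pm e_2\}$), $x=(1/2,0)$, $y=(0,1/2)$: each $u\in\supp z$ sees one integral pairing, so \eqref{eq:subquadratic-inequality} holds with equality, yet $x,y\notin\Z^2$. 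Your proposed repair (the set $S=\{u\in\supp z: u^{\sf T}x\notin\Z\}$ pins $y$ into $L$) fails for precisely this reason: in the example $S=\{\pm e_1\}$, which does not generate $L^*$. The honest equality statement is the per-$u$ disjunction itself. This does not propagate further, since the rest of the paper only invokes the inequality halves of this lemma.
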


A proof for \eqref{eq:subquadratic-inequality2}, which holds for any subquadratic function, can also be found in
\cite{Kominek2006a}; we provide it here for the convenience of the
reader.

\begin{proof}
To show that $f(x) = 4\sum_{u \in L^*} z(u) \sin(\pi u^{\sf T} x)^2$ is subquadratic it suffices to prove the
inequality
\[
\sin(\alpha + \beta)^2 + \sin(\alpha - \beta)^2 \leq 2 \sin(\alpha)^2 + 2 \sin(\beta)^2
\]
for all $\alpha, \beta \in \R$. This is elementary by the sine
addition formula:
\[
\begin{split}
\sin(\alpha + \beta)^2 + \sin(\alpha - \beta)^2 
& = (\sin \alpha \cos \beta + \sin \beta \cos \alpha)^2 + (\sin \alpha \cos \beta - \sin \beta \cos \alpha)^2\\
& = 2(\sin \alpha)^2 (\cos \beta)^2 + 2(\sin \beta)^2 (\cos \alpha)^2 \\
& \leq 2 \sin(\alpha)^2 + 2 \sin(\beta)^2,
\end{split}
\]
where equality holds if and only if $\alpha = \pi u^{\sf T} x$ or $\beta = \pi u^{\sf T} y$ is an integral multiple
of $\pi$.

For even $\lambda$ we directly use~\eqref{eq:subquadratic-inequality}
\[
\begin{split}
  f(\lambda x) & = \, f\left(\frac{\lambda}{2} x + \frac{\lambda}{2} x\right) + f\left(\frac{\lambda}{2} x - \frac{\lambda}{2} x\right)\\
  & \, \leq 4 \left(\frac{\lambda}{2}\right)^2 f(x) = \lambda^2 f(x)
\end{split}
\]
since $f(0) = 0$. For odd $\lambda \geq 3$ we
use~\eqref{eq:subquadratic-inequality} and proceed by induction
\[
\begin{split}
  f(\lambda x) + f(x) & = \, f\left( \left(\frac{\lambda - 1}{2} + 1 \right)
    x + \frac{\lambda - 1}{2} x \right) + f\left( \left(\frac{\lambda - 1}{2} + 1 \right)
      x - \frac{\lambda - 1}{2} x \right)\\
      & \leq \, 2 f\left( \left(\frac{\lambda - 1}{2} + 1 \right)
        x\right) + 2 f \left(\frac{\lambda - 1}{2} x \right)\\
      & \leq \, 2 \left( \left(\frac{\lambda - 1}{2} + 1 \right)^2 +
        \left(\frac{\lambda - 1}{2}\right)^2 \right) f(x)\\
      & = \, \lambda^2 f(x) + f(x).
\end{split}
 \]
\end{proof}

\subsection{Dual feasibility}

In general the dual program \eqref{eq:dual} has infinitely many conditions of
the form
\begin{align} \label{eq:dual-Y-cond}
	\int_{V(L)} (1-\cos(2\pi v^{\sf T}x))  \, d\nu(x) \le \Tr(vv^{\sf T} Y), \qquad v \in L^*.
\end{align}
We will now show that sometimes already finitely many constraints are
sufficient to imply all conditions~\eqref{eq:dual-Y-cond}.  The first
observation is the following:

\begin{lemma}\label{lemma:dual-simplifiation-by-ineqs}
  Let $ q_a(x) = 1-\cos(2\pi a^{\sf T}x) $. The (in-)equalities
	\begin{align}\label{eq:uvY-condition}
		\int_{V(L)}  q_{a}(x)  \, d\nu(x)  &\le
                                                           \Tr(aa^{\sf T}
                                                           Y), \quad
                                                           \int_{V(L)}
                                                           q_{b}(x) \,
                                                           d\nu(x)
                                                           \le
                                                           \Tr(bb^{\sf T}
                                                           Y),   \\
		\label{eq:pmY-condition}
		\int_{V(L)}  q_{a - b}(x)  \, d\nu(x) &= \Tr((a- b)(a- b)^\mathsf{T}Y)
	\end{align}
	imply
	\begin{align*}
		\int_{V(L)}  q_{a + b}(x)  \, d\nu(x) \le\Tr((a + b)(a + b)^\mathsf{T}Y) .
	\end{align*}
        The corresponding result also holds when $q_{a-b}$ and $q_{a+b}$
        are interchanged.
      \end{lemma}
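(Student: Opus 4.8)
The plan is to reduce the lemma to two elementary ingredients: the pointwise trigonometric inequality that already underlies the subquadratic estimate in Lemma~\ref{lemma:sub1}, and the exact parallelogram identity for the quadratic form $u \mapsto u^{\sf T} Y u$ attached to the symmetric matrix $Y$. The equality hypothesis~\eqref{eq:pmY-condition} will be used precisely where no slack can be afforded, while~\eqref{eq:uvY-condition} will be used only as inequalities.

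First I would record the pointwise inequality. Setting $\alpha = 2\pi a^{\sf T} x$ and $\beta = 2\pi b^{\sf T} x$, the cosine addition formula gives, exactly as in the proof of Lemma~\ref{lemma:sub1},
\[
  q_{a+b}(x) + q_{a-b}(x) \;\le\; 2\, q_a(x) + 2\, q_b(x) \qquad \text{for all } x \in \R^n ,
\]
and hence, pointwise, $q_{a+b}(x) \le 2\, q_a(x) + 2\, q_b(x) - q_{a-b}(x)$. Integrating this against the nonnegative measure $\nu$, and then inserting the two inequalities of~\eqref{eq:uvY-condition} for the two positive terms on the right and the equality~\eqref{eq:pmY-condition} for the term being subtracted, I obtain
\[
  \int_{V(L)} q_{a+b}(x)\, d\nu(x) \;\le\; 2\, a^{\sf T} Y a + 2\, b^{\sf T} Y b - (a-b)^{\sf T} Y (a-b).
\]
Since $Y$ is symmetric, the parallelogram law
\[
  (a+b)^{\sf T} Y (a+b) + (a-b)^{\sf T} Y (a-b) = 2\, a^{\sf T} Y a + 2\, b^{\sf T} Y b
\]
shows that the right-hand side equals exactly $(a+b)^{\sf T} Y (a+b) = \Tr((a+b)(a+b)^{\sf T} Y)$, which is the claimed conclusion.

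Finally, the interchanged statement — in which the roles of $q_{a-b}$ in~\eqref{eq:pmY-condition} and of $q_{a+b}$ in the conclusion are swapped — follows immediately from what has just been proved by applying it with $b$ replaced by $-b$, using that $q_{-b}=q_b$ and $(-b)(-b)^{\sf T}=bb^{\sf T}$. There is essentially no obstacle in this argument; the only subtlety worth highlighting is that the equality assumption~\eqref{eq:pmY-condition} is exactly what makes the estimate tight, because it governs the one term that enters with a minus sign, so that term must match $\Tr((a-b)(a-b)^{\sf T}Y)$ on the nose, whereas the two terms in~\eqref{eq:uvY-condition} may be used with slack.
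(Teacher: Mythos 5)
Your proof is correct and follows essentially the same route as the paper's: integrate the pointwise subquadratic inequality from Lemma~\ref{lemma:sub1}, apply the two inequality hypotheses to the positively weighted terms and the equality hypothesis to the subtracted term, then invoke the parallelogram law for $u \mapsto u^{\sf T}Yu$ to recognize the right-hand side as $\Tr((a+b)(a+b)^{\sf T}Y)$. The only addition beyond the paper's argument is that you explicitly handle the interchanged case via the substitution $b \mapsto -b$, which the paper leaves implicit.
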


\begin{proof}
  As shown in the proof of Lemma~\ref{lemma:sub1}, the function $q_a$
  is subquadratic and therefore
	\begin{align*}
		\int_{V(L)}  q_{a+b}(x) \, d\nu(x)  + \int_{V(L)}
          q_{a-b}(x)  \, d\nu(x) &\le 2 \int_{V(L)}  (q_a(x) +q_b(x)) \,
                                   d\nu(x) \\
		&\le 2\Tr(aa^\mathsf{T}Y) + 2 \Tr(bb^\mathsf{T} Y),
	\end{align*}
	which by \eqref{eq:pmY-condition} is equivalent to
	\begin{align*}
			\int_{V(L)}  q_{a+b}(x)  \, d\nu(x)  &\le 2\Tr(aa^\mathsf{T}Y) + 2 \Tr(bb^\mathsf{T} Y) - \Tr((a- b)(a- b)^\mathsf{T}Y)   \\
			& = \Tr((a+b)(a+b)^\mathsf{T} Y).\qedhere
	\end{align*}
\end{proof}

The above lemma can be used to replace the infinitely many constraints
\eqref{eq:dual-Y-cond} by finitely many using the shortest vectors in
cosets of the form $ v+2L^* $ for $ v \in L^* $. This is the content of the next lemma, which is going to be used in the proof of Theorem~\ref{thm:dual-solution-for-2d-case}, where we find the least distortion embedding of two-dimensiona flat tori; it was also used
to determine the exact least Euclidean distortion of $\R^2 / A_2$ and $R^8 / E_8$ in \cite{Moustrou2023a}.

The proof of the next lemma relies on a characterization of \emph{Voronoi
vectors}. These are lattice vectors $ v \in L\setminus\{0\}  $ such that the set $  F_v :=V(L) \cap \{x \, : \, v^\mathsf{T}x = \frac{1}{2}
v^\mathsf{T}v \}$ defines a non-empty face of $ V(L)$. Moreover, $ v \in L $ is called \emph{Voronoi relevant} if $ F_v $ is a facet of $ V(L) $, i.e.\ an $ (n-1) $-dimensional face of $ V(L) $.

An element $ v \in L\setminus\{0\} $ is a Voronoi vector of $ V(L)$ if and
only if $ \pm v$ are shortest vectors in the coset $ v+2L$ and
$ \pm v$ are Voronoi relevant if and only if they are the \emph{only}
shortest vectors in $ v+2L$. For a proof see \cite[Chapter 21, Theorem
10]{Conway1988a} and \cite[Theorem 2]{Conway1992}.

\begin{lemma}
  \label{lem:equality-for-Vor-rel-vecs}
  If \eqref{eq:dual-Y-cond} is tight for at least one shortest vector
  in each coset of the form $ v+2L^*, \, v \in L^*$, then
  \eqref{eq:dual-Y-cond} holds for all $ v \in L^*$.
\end{lemma}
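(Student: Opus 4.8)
The plan is to argue by strong induction on the Euclidean length $|v|$ of a vector $v\in L^*$, with Lemma~\ref{lemma:dual-simplifiation-by-ineqs} supplying the inductive step. It is convenient to set
\[
  g(v) := \Tr(vv^{\sf T}Y) - \int_{V(L)} q_v(x)\, d\nu(x), \qquad v \in L^*,
\]
so that \eqref{eq:dual-Y-cond} for a vector $v$ is exactly the assertion $g(v)\ge 0$; note that $g$ is even, $g(-v)=g(v)$, and $g(0)=0$. The hypothesis says that every coset $v+2L^*$ contains a shortest vector $w$ (i.e.\ $|w|=\min\{|v'|:v'\in v+2L^*\}$) with $g(w)=0$, and since the lengths of the vectors of $L^*$ form a discrete, well-ordered set, strong induction on $|v|$ is legitimate.

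First I would fix $v\in L^*$ and assume, as induction hypothesis, that $g(v')\ge 0$ for every $v'\in L^*$ with $|v'|<|v|$; the base case $v=0$ holds since $g(0)=0$. Pick a shortest vector $w$ of the coset $v+2L^*$ with $g(w)=0$; then $|w|\le|v|$, because $v$ itself lies in that coset. If $v=\pm w$ we are done at once, since $g(v)=g(w)=0$. Otherwise put $a=\tfrac12(v+w)$ and $b=\tfrac12(v-w)$. Because $v\equiv w\pmod{2L^*}$, both $a$ and $b$ lie in $L^*$; moreover $a+b=v$, $a-b=w$, and $a\neq 0\neq b$ since $v\neq\pm w$.

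The crucial point is that $a$ and $b$ are both strictly shorter than $v$. By the parallelogram identity,
\[
  |a|^2+|b|^2=\tfrac14\bigl(|v+w|^2+|v-w|^2\bigr)=\tfrac12\bigl(|v|^2+|w|^2\bigr)\le|v|^2,
\]
using $|w|\le|v|$; since $a\neq 0$ and $b\neq 0$ this forces $|a|^2<|v|^2$ and $|b|^2<|v|^2$. Hence the induction hypothesis gives $g(a)\ge 0$ and $g(b)\ge 0$, while $g(a-b)=g(w)=0$. Lemma~\ref{lemma:dual-simplifiation-by-ineqs}, applied to this pair $a,b$, then yields $g(a+b)=g(v)\ge 0$, which closes the induction and proves the lemma.

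The one step I expect to require care is this length estimate: one must observe that equality $|a|^2+|b|^2=|v|^2$ can occur only if $|w|=|v|$ and one of $a,b$ vanishes — that is, precisely in the already-handled case $v=\pm w$ — so that in the remaining case the strict inequalities $|a|<|v|$, $|b|<|v|$ genuinely hold and the induction hypothesis really applies. Everything else is a direct invocation of Lemma~\ref{lemma:dual-simplifiation-by-ineqs} together with the parallelogram law.
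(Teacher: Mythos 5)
Your proof is correct, and it streamlines the argument in the paper in a genuinely nicer way. The paper carries out a two-stage induction: first over the dimension of the face $F_v$ of the Voronoi cell $V(L^*)$ (starting from Voronoi relevant vectors, where tightness holds by hypothesis, and using Lemma~\ref{lemma:dual-simplifiation-by-ineqs} to descend to lower-dimensional faces), and then a separate norm-based induction for $v$ that are not Voronoi vectors, where the bound $|\tfrac12(u\pm v)|\le\tfrac12(|u|+|v|)<|v|$ uses the strict inequality $|u|<|v|$ available in that case. You instead run a single strong induction on $|v|$ for all $v\in L^*$, and the key observation that makes this work uniformly is the parallelogram identity $|a|^2+|b|^2=\tfrac12(|v|^2+|w|^2)\le|v|^2$: even in the Voronoi-vector case $|w|=|v|$, where the triangle inequality only gives $|a|,|b|\le|v|$, the parallelogram law combined with $a\neq 0\neq b$ forces the strict inequalities $|a|<|v|$ and $|b|<|v|$. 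This collapses the paper's two cases (Voronoi vs.\ non-Voronoi) into one and removes the dependence on the face structure of $V(L^*)$ entirely. The application of Lemma~\ref{lemma:dual-simplifiation-by-ineqs} with $a-b=w$ (where equality holds by hypothesis) and $a+b=v$ is exactly as in the paper, so both proofs rest on the same workhorse; yours simply finds a cleaner quantity to induct on.
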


\begin{proof}
Assume that \eqref{eq:dual-Y-cond} is tight for at least one
shortest vector in each coset. We will first prove by
induction that \eqref{eq:dual-Y-cond} also holds for all Voronoi
vectors.

Let $u$ be a Voronoi vector. We perform an induction on the codimension $k = n - \dim(F_u)$ of the face $F_u$ of $V(L^*)$ defined by $u$.

The base case is $k=1$ follows immediately from the assumption because $\pm u$ are the only shortest vectors in the corresponding coset. 

Suppose $k > 1$. Assume that $w \in u + 2L^*$ is a shortest vector such that \eqref{eq:dual-Y-cond} is tight. Furthermore, assume that $u \neq \pm w$ (otherwise we would be done). Define $a = (u+w)/2$ and $b = (u-w)/2$ so that $u = a+b$ and $w = a-b$. Note that $a,b \in L^*$ and that $F_a$, $F_b$ are faces of codimension at most $k-1$ with $F_a \cap F_b = F_u$. Hence, by the induction hypothesis \eqref{eq:dual-Y-cond} holds for $a$ and $b$ and we can apply Lemma~\ref{lemma:dual-simplifiation-by-ineqs} to infer that \eqref{eq:dual-Y-cond} also holds for $u$.

\smallskip

Now assume that $ v $ is a not a Voronoi vector. Then there
exists a shortest vector $ u \in v+2L^* $ for which
\eqref{eq:dual-Y-cond} is tight and $ |u| < |v| $.

Then, again $ \frac{1}{2}(u \pm v) \in L^* $ and as
\begin{align*}
|\frac{1}{2}(u \pm v) | \le \frac{1}{2} (|u| + |v|) < |v|,
\end{align*}
we can argue by an analogous inductive argument (based on the
norm) as before that \eqref{eq:dual-Y-cond} holds for
$ \frac{1}{2}(u \pm v) $.  Finally, we can use
Lemma~\ref{lemma:dual-simplifiation-by-ineqs} to infer that~\eqref{eq:dual-Y-cond} is valid for $ v $ as well.
\end{proof}

\section{Least Euclidean distortion embeddings always have finite
  dimension}
\label{sec:finite-dimensional-embedding}

The goal of this section is to prove that for every $ n $-dimensional
lattice, there always exists a least distortion embedding of
$ \R^n/L $ that is finite-dimensional. In the sense of
Theorem~\ref{thm:primal-sym}, this means that there is always an
optimal solution $ (C,z) $ for~\eqref{eq:primal-simple} such that the
support of $z$ is finite.

Additionally, our arguments will reveal that the constructed optimal
solution with finite support has only support on at most one vector
per coset $ v+2L^* $ of $ L^*/2L^* $ and that $ \supp(z) $ only
contains primitive lattice vectors.  An element $v \in L$ is called
\emph{primitive} for $L$ if $\alpha v \in L$ with $\alpha \in \Z$
implies $\alpha = \pm 1$.

The first step towards proving that there is always a finite-dimensional least Euclidean distortion embedding is the following observation.
\begin{lemma}\label{lemma:support-decrease-simple}
	Assume that $ (C,z) $ is a solution for~\eqref{eq:primal-simple}.
	\begin{enumerate}
		\item  If there are $ u, v \in \supp(z), \, u \neq \pm v$ with $ u\pm v \in 2L^* $ and $ z(v) \le z(u) $, then $ (C,\tilde{z}) $ with
		\begin{align*}
			\tilde{z}( t) = \begin{cases}
				z(u) - z(v), &\text{if } t=  \pm u  \\
				0, &\text{if } t = \pm v,  \\
				2z(v)+ z(t), &\text{if } t  \in \{\frac{\pm u \pm v}{2} \},    \\
				z(t), &\text{otherwise,}
			\end{cases}
		\end{align*}
		is a solution for~\eqref{eq:primal-simple}.
		\item  If there is $ u \in \supp(z) $ and $ u = kv  $ for some integer $ k \ge 2 $,
		then $ (C,\tilde{z}) $ with
		\begin{align*}
			\tilde{z}(t) = \begin{cases}
				 0,&\text{if } t = \pm u,  \\
				 z(v) + k^2z(u), &\text{if } t =\pm
				  v,  \\
				z(t), &\text{otherwise,}
			\end{cases}
		\end{align*}
		is a solution for~\eqref{eq:primal-simple}.
	\end{enumerate}
	In both cases, $ \tilde{z} $ satisfies
	\begin{align*}
		\sum_{t \in L^*} z(t) tt^\mathsf{\top} = \sum_{t \in L^*} \tilde{z}(t) tt^\mathsf{\top} \quad \text{and} \quad \sum_{t \in L^*} z(t) < \sum_{t \in L^*} \tilde{z}(t).
	\end{align*}
\end{lemma}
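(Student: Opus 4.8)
The plan is to verify the three requirements separately in each of the two cases: that $\tilde z$ is nonnegative and symmetric with $\tilde z \in \ell^1(L^*)$; that $\tilde z$ preserves the second-moment matrix $\sum_t z(t)tt^{\sf T}$, which immediately gives both the semidefinite constraint $CI - 4\pi^2\sum_t \tilde z(t)tt^{\sf T} \in \mathcal S^n_+$ and the identity claimed in the final display; that the lower bound $|x|^2 \le 2\sum_t \tilde z(t)(1-\cos(2\pi t^{\sf T}x)) = \tilde f(x)$ still holds for all $x \in V(L)$, for which it suffices to show $\tilde f(x) \ge f(x)$ pointwise on $\R^n$; and finally that $\sum_t \tilde z(t) > \sum_t z(t)$ strictly.

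For the moment identity in case (1), I would compute directly: the change in $\sum_t z(t)tt^{\sf T}$ is $-z(v)uu^{\sf T} - z(v)vv^{\sf T} + 2z(v)\sum_{\pm}\left(\tfrac{\pm u\pm v}{2}\right)\left(\tfrac{\pm u\pm v}{2}\right)^{\sf T}$, and the parallelogram-type identity $\tfrac14(u+v)(u+v)^{\sf T} + \tfrac14(u-v)(u-v)^{\sf T} = \tfrac12 uu^{\sf T} + \tfrac12 vv^{\sf T}$ (summed over the two sign choices, i.e.\ doubled) makes this vanish. For case (2) with $u = kv$, the change is $-z(u)(kv)(kv)^{\sf T} + k^2 z(u)vv^{\sf T} = 0$ trivially. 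The strict inequality on the masses follows in case (1) from $\sum \tilde z - \sum z = (z(u)-z(v)) - z(u) - z(v) + 4z(v) = 2z(v) > 0$ (using $v \in \supp(z)$ so $z(v)>0$), and in case (2) from $-z(u) + k^2 z(u) = (k^2-1)z(u) > 0$.

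The main obstacle — and the real content — is the pointwise inequality $\tilde f \ge f$ on $\R^n$, equivalently $\tilde f - f \ge 0$. In case (1), $\tfrac12(\tilde f - f)(x)$ equals $2z(v)\left[\,(1-\cos 2\pi\tfrac{(u+v)^{\sf T}x}{2}) + (1-\cos 2\pi\tfrac{(u-v)^{\sf T}x}{2})\,\right] - z(v)(1-\cos 2\pi u^{\sf T}x) - z(v)(1-\cos 2\pi v^{\sf T}x)$, and writing $\alpha = \pi(u+v)^{\sf T}x$, $\beta = \pi(u-v)^{\sf T}x$ so that $u^{\sf T}x = (\alpha+\beta)/(2\pi)$ and $v^{\sf T}x = (\alpha-\beta)/(2\pi)$, this reduces (after dividing by $z(v)$) exactly to the subquadratic inequality $2(1-\cos\alpha) + 2(1-\cos\beta) \ge (1-\cos(\alpha+\beta)) + (1-\cos(\alpha-\beta))$ established in the proof of Lemma~\ref{lemma:sub1}, which is elementary via the cosine addition formula. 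In case (2), $\tfrac12(\tilde f - f)(x) = k^2 z(u)(1-\cos 2\pi v^{\sf T}x) - z(u)(1-\cos 2\pi k v^{\sf T}x) \ge 0$ is precisely inequality~\eqref{eq:subquadratic-inequality2} of Lemma~\ref{lemma:sub1} applied to the single-frequency function $x \mapsto 1-\cos(2\pi v^{\sf T}x)$ with $\lambda = k$. So once the reduction to Lemma~\ref{lemma:sub1} is made explicit, the feasibility of the lower-bound constraint is automatic, and the remaining checks are the routine bookkeeping above.
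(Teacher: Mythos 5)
Your proposal is correct and takes essentially the same route as the paper: preserve the second-moment matrix via the parallelogram identity $\tfrac14(u+v)(u+v)^{\sf T}+\tfrac14(u-v)(u-v)^{\sf T}=\tfrac12 uu^{\sf T}+\tfrac12 vv^{\sf T}$, and show the lower-bound constraint is preserved pointwise by reducing to the subquadratic inequalities of Lemma~\ref{lemma:sub1}. The only cosmetic discrepancy is a uniform factor of $2$ in your $\tfrac12(\tilde f - f)$ bookkeeping (you appear to count each pair $\{t,-t\}$ once rather than twice), which scales the expression but does not affect nonnegativity; you also make case (2) explicit via~\eqref{eq:subquadratic-inequality2}, where the paper simply says ``analogous.''
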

\begin{proof}
	\textit{(1)}
	By construction, we have
	\[
	 \sum_{t \in L^*} z(t) < \sum_{t \in L^*} z(t)+ 4z(v) =  \sum_{t \in L^*} \tilde{z}(t) .
	\]
	Computing
	\begin{align*}
		& z(u)uu^{\sf{\top}} + z(v) vv^{\sf{\top}} =  (z(u)-z(v)) uu^{\sf{\top}} + z(v) (uu^\mathsf{\top}+ vv^\mathsf{\top})   \\
		= \, &(z(u)-z(v)) uu^{\sf{\top}} + 2z(v) \left (\left(\frac{u+v}{2} \right) \left(\frac{u+v}{2}\right)^\mathsf{\top}  +\left(\frac{u-v}{2} \right) \left(\frac{u-v}{2}\right)^\mathsf{\top} \right),
	\end{align*}
	(and analogously for the pair $ -u,-v $) we obtain $ \sum_{t \in L^*} z(t) tt^\mathsf{\top} = \sum_{t \in L^*} \tilde{z}(t) tt^\mathsf{\top} $ and $ CI-4\pi^2 \sum_{t \in L^*} \tilde{z}(t) tt^\mathsf{\top}  \in \mathcal{S}_n^+  $.

	Moreover, by the subquadratic inequality,
	\begin{align*}
		&1-\cos(2\pi u^\mathsf{\top}x) + 1-\cos(2\pi v^\mathsf{\top}x)   \\
		=\, & 1-\cos\left(2\pi  \left(\frac{u+v}{2} + \frac{u-v}{2}\right)^\mathsf{\top}   x\right) + 1-\cos\left(2\pi \left(\frac{u+v}{2} - \frac{u-v}{2}\right)^\mathsf{\top}x\right)  \\
		\le\, & 2 \left(1-\cos\left(2\pi \left(\frac{u+v}{2}\right)^\mathsf{\top}x\right)\right) + 2\left(1-\cos\left(2\pi \left(\frac{u-v}{2}\right)^\mathsf{\top}x\right)\right).
	\end{align*}
	Thus, for every $ x \in V(L) $
	\begin{align*}
		|x|^2 \leq 2 \sum_{t \in L^*} z(t) (1- \cos(2\pi t^{\sf T} x)) \le  2 \sum_{t \in L^*} \tilde{z}(t) (1- \cos(2\pi t^{\sf T} x)),
	\end{align*}
	implying that $ (C,\tilde{z}) $ is feasible for~\eqref{eq:primal-simple} with the desired properties.

	\textit{(2)} The proof is analogous to (1).
\end{proof}

The lemma gives rise to a procedure that transforms a feasible
solution $ (C,z) $ into a solution $ (C,\tilde{z}) $ such that
$ \tilde{z} $ has only support on at most one primitive lattice
element per coset $ u+2L^* $. Roughly speaking, start with any
solution and apply the above lemma ``as long as possible'', i.e.\ as
long as there are pairs of vectors that satisfy \textit{(1)} or
\textit{(2)} of the above lemma. We do not know how to turn this procedure into an algorithm that terminates after finitely many steps,
but we can show that the procedure converges to a solution with the desired properties.

\begin{theorem}\label{thm:finite-support}
	For any $ n $-dimensional lattice $ L $, the torus $ \R^n /L $ has a finite-dimensional least Euclidean distortion embedding.
	In particular, the program~\eqref{eq:primal-simple} has an optimal solution $ (C,z) $ such that
	\begin{enumerate}
		\item \label{item:finite-supp}  $| \supp(z)  \cap (v+2L^*) | \le 1$ for every coset $ v+2L^* $ of $  L^*/2L^* $.
		\item \label{item:primitive} Every $ u \in \supp(z) $ is primitive in $ L^* $.
	\end{enumerate}
\end{theorem}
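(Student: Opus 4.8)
The plan is to exhibit an optimal solution of~\eqref{eq:primal-simple} that has finite support and satisfies~\eqref{item:finite-supp} and~\eqref{item:primitive}, by selecting, among all optimal solutions, one that \emph{maximizes the total mass} $\sum_{u\in L^*}z(u)$ and then invoking Lemma~\ref{lemma:support-decrease-simple}. An optimal solution exists by Remark~\ref{rem:inf-is-min} and the equivalence established in Theorem~\ref{thm:primal-simple}. Since $z(0)$ contributes nothing to the function $2\sum_u z(u)(1-\cos(2\pi u^{\sf T}x))$ nor to $\sum_u z(u)uu^{\sf T}$, and only adds a constant coordinate to the embedding $\varphi$ of~\eqref{eq:varphi-embedding}, we fix $z(0)=0$ throughout, so that $\sum_u z(u)=\sum_{u\neq 0}z(u)$. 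First I would observe that the total mass is bounded on the whole feasible set of~\eqref{eq:primal-simple}: the semidefinite constraint gives $\sum_u z(u)uu^{\sf T}\preceq\tfrac{C}{4\pi^2}I$, hence $\sum_u z(u)|u|^2=\Tr\big(\sum_u z(u)uu^{\sf T}\big)\le\tfrac{nC}{4\pi^2}$, and since $|u|\ge\lambda(L^*)$ for every nonzero $u\in L^*$ we obtain $\sum_u z(u)\le\tfrac{nC}{4\pi^2\lambda(L^*)^2}<\infty$. Let $S$ denote the supremum of $\sum_u z(u)$ over all optimal solutions of~\eqref{eq:primal-simple}.

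The key step is to show that $S$ is attained. I would pass to the continuous positive type function $f(x)=\sum_u z(u)\cos(2\pi u^{\sf T}x)$ attached to a feasible $z$, so that $z=\widehat f$, $f(0)=\sum_u z(u)$, and the contraction constraint reads $|x|^2\le 2(f(0)-f(x))$ for $x\in V(L)$. Rewriting the semidefinite constraint as $4\pi^2\sum_u z(u)(u^{\sf T}y)^2\le C|y|^2$ for all $y\in\R^n$ and using $1-\cos\alpha\le\alpha^2/2$ yields $f(0)-f(y)\le\tfrac{C}{2}|y|^2$ for every $y\in\R^n$; combined with the uniform bound $f(0)\le\tfrac{nC}{4\pi^2\lambda(L^*)^2}$ from the first step and the inequality $2f(0)(f(0)-f(y))\ge(f(x+y)-f(x))^2$ valid for any positive type function (as derived in Section~\ref{sec:SDP}), this shows that the functions $f$ coming from feasible solutions form a uniformly bounded and uniformly Lipschitz family on the compact group $\R^n/L$. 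Hence, given optimal solutions $(C,z_k)$ with $\sum_u z_k(u)\to S$, the Arzel\`a--Ascoli theorem yields a subsequence along which $f_k\to f_\infty$ uniformly, with $f_\infty$ continuous and of positive type; put $z^*=\widehat{f_\infty}\ge 0$. Uniform convergence forces the Fourier coefficients to converge, so $z_k(u)\to z^*(u)$ for every $u$, whence Fatou's lemma preserves the semidefinite constraint; and since $f_k(0)\to f_\infty(0)$ and $f_k(x)\to f_\infty(x)$, the contraction inequality $|x|^2\le 2(f_k(0)-f_k(x))$ on $V(L)$ passes to the limit. Thus $(C,z^*)$ is feasible, and $\sum_u z^*(u)=f_\infty(0)=\lim_k f_k(0)=S$, so $(C,z^*)$ is an optimal solution attaining $S$.

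Now I would argue that $z^*$ automatically has the claimed structure. Suppose $z^*$ violated~\eqref{item:primitive}, i.e.\ some $u\in\supp(z^*)$ equals $kv$ with $v\in L^*$ and $k\ge 2$; or violated~\eqref{item:finite-supp}, i.e.\ two support vectors $u\ne\pm v$ satisfied $u\pm v\in 2L^*$, equivalently lay in a common coset of $L^*/2L^*$. In either case Lemma~\ref{lemma:support-decrease-simple} would produce a feasible $(C,\tilde z)$ with the same $C$, hence again optimal, but with $\sum_u\tilde z(u)>\sum_u z^*(u)=S$, contradicting maximality. Therefore $z^*$ satisfies~\eqref{item:finite-supp} and~\eqref{item:primitive}; in particular $\supp(z^*)$ meets each of the $2^n-1$ nontrivial cosets of $L^*/2L^*$ in at most one pair $\{u,-u\}$ of primitive vectors, so it is finite. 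Consequently the embedding $\varphi$ of Theorem~\ref{thm:primal-sym} built from $z^*$ has only finitely many nonzero coordinates and distortion at most $\sqrt{C}=c_2(\R^n/L)$, hence equal to $c_2(\R^n/L)$, which is the desired finite-dimensional least distortion embedding.

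I expect the main obstacle to be the attainment of the supremum $S$ in the second step: boundedness of the total mass and the final reduction via Lemma~\ref{lemma:support-decrease-simple} are routine, whereas attainment needs a genuine compactness statement. What makes it work is that, thanks to Theorem~\ref{thm:primal-simple}, the contraction \emph{upper} bound is encoded by one finite-dimensional semidefinite constraint; this is exactly what gives $f(0)-f(y)\le\tfrac{C}{2}|y|^2$ for \emph{all} $y\in\R^n$ and hence the equi-Lipschitz property needed for Arzel\`a--Ascoli. (An alternative, more hands-on route would be to run the reduction of Lemma~\ref{lemma:support-decrease-simple} iteratively starting from an arbitrary optimal solution and to prove that the process terminates; the monotone quantity $\sum_u z(u)$ is bounded but its increments may tend to $0$, so this would require additional bookkeeping, which is why I prefer the compactness argument above.)
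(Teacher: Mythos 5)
Your proof is correct and follows a genuinely different route from the paper, even though both rely on the same monotone quantity and on Lemma~\ref{lemma:support-decrease-simple}.

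The paper's proof is constructive: starting from one optimal solution it iteratively applies transformation \textit{(1)} of Lemma~\ref{lemma:support-decrease-simple}, shows $\sum_u |z_m(u)-z_{m-1}(u)| = 3(Z_m-Z_{m-1})$ so that $(z_m)$ is Cauchy in $\ell^1(L^*)$ because $(Z_m)$ converges, and then verifies that the pointwise limit has $A_z=\emptyset$ via a careful argument about which pairs eventually get selected; a second pass handles primitivity. Your proof replaces the iterative construction and the bookkeeping it requires by a one-shot extremal argument: among all optimal solutions, pick one maximizing the total mass $\sum_u z(u)$, prove attainment of the supremum by a compactness argument, and observe that such a maximizer must automatically satisfy \textit{(1)} and \textit{(2)} since otherwise Lemma~\ref{lemma:support-decrease-simple} would produce an optimal solution with strictly larger mass. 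This buys you a cleaner termination argument --- no need to verify a fair selection rule or that ``the actual choice of the pair does not matter'' --- at the cost of a genuine compactness statement. Your Arzel\`a--Ascoli argument for attainment is sound: the finite-dimensional semidefinite constraint gives both the uniform bound $f(0)\le nC/(4\pi^2\lambda(L^*)^2)$ and, via $1-\cos\alpha\le\alpha^2/2$, the Lipschitz estimate $|f(x+y)-f(x)|\le\sqrt{f(0)C}\,|y|$; uniform convergence of $f_k$ preserves positive type, passes the contraction constraint to the limit, forces Fourier coefficients to converge, and Fatou preserves the semidefinite constraint. One could streamline the attainment step by instead invoking the weak* compactness of Remark~\ref{rem:inf-is-min} together with lower semicontinuity of $f\mapsto f(0)$, but the Arzel\`a--Ascoli route you chose is self-contained and equally valid.
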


Note that claim \textit{(1)} shows that there are at most $2^n -1$ non-zero elements in the support of $z$, therefore we obtain an embedding into a space of dimension at most $2^n-1$.

\begin{proof}
	Let 
	$ (C,z_0) $ with $ z_0 \in \ell^1(L^*) $ be an optimal solution for~\eqref{eq:primal-simple}.
	Our goal is to construct a sequence of solutions $ (C,z_m) $ for~\eqref{eq:primal-simple} that converges to a solution that satisfies \textit{(1)} and \textit{(2)}.
	Let
	\begin{align}\label{eq:A_z-B_z}
		A_z &=  \{  \{u,v \} \,   : \, u \neq \pm v, u \pm v \in 2L^*, u,v \in \supp(z)  \}
	\end{align}
	and let $ (z_m)_m $ be a sequence where $ z_m $ is obtained from $ z_{m-1} $ by applying transformation \textit{(1)} of Lemma~\ref{lemma:support-decrease-simple} to an arbitrary pair $ \{u,v\} \in A_z $ so that $Z_m = \sum_{u \in L^*} z_m(u) $ is maximized (which exists by weak-$*$ compactness). 
	Due to Lemma~\ref{lemma:support-decrease-simple}, the pair $ (C,z_m) $ is feasible for~\eqref{eq:primal-simple} and we have
	\begin{align*}
		\sum_{u \in L^*} z_m(u) uu^\mathsf{\top} = \sum_{u \in L^*} z_{m+1}(u) uu^\mathsf{\top} \quad \text{and} \quad
		Z_m  < Z_{m+1} \quad \text{for all } m \in \N.
	\end{align*}
	The sequence $ Z_m $ is monotonously increasing but bounded, hence, by monotone convergence, the sequence $ Z_m $ converges.

	Now we claim that $ \lim_{m \to \infty} z_m(u) $ exists for all $ u \in L^* $.
	Therefore, assume that $ \{u_m,v_m\} \in A_{z_{m-1}} $ is chosen in the iteration from $ z_{m-1} $ to $ z_m $.
	Assume that $ z_{m-1}(u_m) \ge  z_{m-1}(v_m)$.
	Then, using Lemma~\ref{lemma:support-decrease-simple}, we obtain
	\begin{align*}
		\sum_{u \in L^*}| z_{m}(u) - z_{m-1}(u) | =3\cdot 4z_{m-1}(v_m) = 3(Z_m - Z_{m-1}).
	\end{align*}
	The right hand side converges to zero, therefore the sequence $ z_m $ converges pointwise, i.e.\ there is $ z \in \ell^1(L^*) $ such that
	\begin{align*}
		\lim_{m \to \infty} z_m(u) = z(u)  \quad \text{for all } u \in L^*.
	\end{align*}
	Next, we will show that $ z $ satisfies \textit{(1)}, which is equivalent to $ A_z = \emptyset $. But this simply follows by construction:
	For every pair $ \{u,v\} \in A_{z_m}  $ we have
	\begin{align*}
		\lim_{m \to \infty} \min \{ z_m(u), z_m(v) \} = 0.
	\end{align*}
	This holds because if there was $ \{u,v\} \in A_z  $ and $ \varepsilon > 0 $ such that for all $ M $ there was $ m \ge M$
	with $ \min \{ z_m(u), z_m(v) \} \ge \varepsilon$, then according to the construction of $ z_m $ there would also be $ m' \ge m$ with
	\begin{align*}
		Z_{m'} -Z_{m} \ge  4\min \{ z_m(u), z_m(v) \}\ge 4\varepsilon .
	\end{align*}
	This would be a contradiction to the convergence of the sequence $ Z_m $.

	Now, if there is $ u \in \supp(z) $ with $ u = kv $ for some $ k \ge 2 $, we may apply \textit{(2)} of Lemma~\ref{lemma:support-decrease-simple} to obtain a new feasible solution $ \tilde{z} $ with $ v \in \supp(\tilde{z}) $ and $ z(u) = 0 $. This solution may contain a pair $ (u,v) \in A_{\tilde{z}} $. But in this case, we may again apply \textit{(1)} of Lemma~\ref{lemma:support-decrease-simple}.

	By continuing like this, we will finally end up with a solution that has only support on primitive vectors and on one vector per coset, thus satisfying properties \textit{(1)} and \textit{(2)}.
\end{proof}

Unfortunately, the proof does not give a bound on
$ \max \{|u| \, : \, u \in \supp(\tilde{z})\} $ for $ \tilde{z} $ constructed in Theorem~\ref{thm:finite-support}.

\section{Improved lower bound}
\label{sec:lower-bound}

In this section we apply Theorem~\ref{thm:dual-simple} to get a
constant factor improvement over~\eqref{eq:HRbound}, basically without
any effort.

\begin{theorem}
  \label{thm:lowerbound1}
Let $L$ be an $n$-dimensional lattice, then
\[
    c_2(\R^n/ L) \geq \frac{\pi \lambda(L^*) \mu(L)}{\sqrt{n}}.
\]
\end{theorem}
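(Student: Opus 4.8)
The plan is to construct an explicit dual-feasible pair $(\nu, Y)$ for~\eqref{eq:dual} and evaluate the resulting lower bound. The key observation is that the dual constraint $\int_{V(L)} (1 - \cos(2\pi u^{\sf T} x)) \, d\nu(x) \leq u^{\sf T} Y u$ will be easiest to control if $\nu$ is supported on a single point $x^*$ realizing the covering radius, i.e.\ $x^* \in V(L)$ with $|x^*| = \mu(L)$, so that $\nu = t \, \delta_{x^*}$ for a scaling factor $t > 0$ to be chosen. For $Y$ I would take $Y = \tfrac{1}{n} I$, which is the natural symmetric choice satisfying $\Tr(Y) = 1$ and gives $u^{\sf T} Y u = |u|^2/n$ for all $u$.

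With these choices the objective becomes $2\pi^2 \int_{V(L)} |x|^2 \, d\nu(x) = 2\pi^2 t \, \mu(L)^2$, and the family of constraints reduces to the single requirement
\[
  t \, (1 - \cos(2\pi u^{\sf T} x^*)) \leq \frac{|u|^2}{n} \quad \text{for all } u \in L^*.
\]
Since $1 - \cos(2\pi u^{\sf T} x^*) \leq 2$ always, it suffices to ensure $2t \leq |u|^2 / n$ for every nonzero $u \in L^*$, i.e.\ $2t \leq \lambda(L^*)^2 / n$; for $u = 0$ both sides vanish. So I take $t = \lambda(L^*)^2 / (2n)$. Plugging this into the objective gives the lower bound
\[
  c_2(\R^n/L)^2 \geq 2\pi^2 \cdot \frac{\lambda(L^*)^2}{2n} \cdot \mu(L)^2 = \frac{\pi^2 \lambda(L^*)^2 \mu(L)^2}{n},
\]
and taking square roots yields $c_2(\R^n/L) \geq \pi \lambda(L^*) \mu(L) / \sqrt{n}$, as claimed.

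The only thing that needs care is verifying the hypotheses of Theorem~\ref{thm:dual-simple}: that $\nu = t\,\delta_{x^*} \in \mathcal{M}_+(V(L))$ (immediate, since $x^* \in V(L)$), that $Y = \tfrac1n I \in \mathcal{S}^n_+$ with $\Tr(Y) = 1$ (immediate), and that the constraint $\int_{V(L)} q_u \, d\nu \leq u^{\sf T} Y u$ holds for \emph{all} $u \in L^*$ — which is exactly the elementary estimate $1 - \cos \leq 2$ combined with the definition of $\lambda(L^*)$. I do not expect any genuine obstacle here; the point of the theorem is precisely that this crude choice already beats~\eqref{eq:HRbound} by a constant factor (the improvement being $\pi$ versus $1/4$). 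One could remark that sharper choices of $\nu$ (a measure rather than a point mass) and $Y$ are what one needs for the tight results in later sections, but for this bound the above suffices.
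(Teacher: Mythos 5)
Your proposal is correct and is essentially identical to the paper's proof: the paper also takes $\nu = \frac{\lambda(L^*)^2}{2n}\delta_y$ with $y$ a deep hole and $Y = \frac{1}{n}I$, verifies feasibility via $1-\cos \le 2$ and the definition of $\lambda(L^*)$, and evaluates the objective to get $\pi^2\lambda(L^*)^2\mu(L)^2/n$. The only cosmetic difference is that you derive the scaling constant $t$ rather than announcing it.
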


\begin{proof}
  Let $y$ be a vertex of the Voronoi cell $V(L)$ which realizes the
  covering radius, that is $|y| = \mu(L)$ and so $y$ is a ``deep hole'' of
  $L$. Choose $\nu = \frac{\lambda(L^*)^2}{2n} \delta_y$ to be a point
  measure supported at $y$ and set $Y = \frac{1}{n} I$. Then
  $(\nu, Y)$ is feasible for~\eqref{eq:dual} because
  \[
  \int_{V(L)} (1 - \cos(2\pi u^{\sf T} x)) \, d\nu(x) =
  (1-\cos(2\pi  u^{\sf T} y)) \frac{\lambda(L^*)^2}{2n}
  \leq \frac{\lambda(L^*)^2}{n} \leq \frac{|u|^2}{n} = u^{\sf T} Y u
 \]
 for every $u \in L^* \setminus \{0\}$.  Hence, by
 Theorem~\ref{thm:dual-simple},
\[
  c_2(\R^n / L)^2 \geq 2 \pi^2 \int_{V(L)} |x|^2 \, d\nu(x) =
  \frac{\pi^2 \lambda(L^*)^2 \mu(L)^2}{n}. \qedhere
\]
 \end{proof}

 \section{Least distortion embeddings of $\R^n / \Z^n$ and of orthogonal decompositions}
 \label{sec:standard-torus}

 As our second application of Theorem~\ref{thm:dual-simple}, through
 Theorem~\ref{thm:lowerbound1}, we prove that the standard
 embedding~\eqref{eq:standard-embedding} of the standard torus is
 indeed a least distortion embedding. It is somewhat surprising that
 this result is new. We also note that one can easily use the same
 argument to capture the case of flat tori whose lattices have an
 orthogonal basis.

 \begin{theorem}\label{thm:standard-torus-embedding}
   The standard embedding $\varphi : \R^n / \Z^n \to \R^{2n}$ of the
   standard torus $\R^n / \Z^n$ given by
   \[
  \varphi(x_1, \ldots, x_n) = (\cos 2\pi x_1, \sin 2\pi x_1, \ldots,
  \cos 2\pi x_n, \sin 2\pi x_n)
\]
is a least distortion embedding with distortion $c_2(\R^n/\Z^n) = \pi/2$.
\end{theorem}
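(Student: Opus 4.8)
The plan is to compute the distortion of the standard embedding $\varphi$ directly and then match it with the lower bound from Theorem~\ref{thm:lowerbound1}. For the standard torus $L = \Z^n$ we have $L^* = \Z^n$, $\lambda(L^*) = 1$, and the deep holes are the points $\frac{1}{2}(\pm 1, \ldots, \pm 1)$, so $\mu(L) = \sqrt{n}/2$. Plugging these into Theorem~\ref{thm:lowerbound1} gives
\[
  c_2(\R^n/\Z^n) \;\geq\; \frac{\pi \cdot 1 \cdot \sqrt{n}/2}{\sqrt{n}} \;=\; \frac{\pi}{2}.
\]
So the whole task reduces to showing $\dist(\varphi) \leq \pi/2$.

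To do this I would first observe that $\varphi$ is exactly the embedding $\varphi$ from~\eqref{eq:varphi-embedding} (in its real form) associated to the feasible solution of~\eqref{eq:primal-simple} that puts weight $z(u) = 1$ on each of the $2n$ standard unit vectors $\pm e_1, \ldots, \pm e_n$ and $z(u)=0$ elsewhere; more precisely, for this choice $\|\varphi(x) - \varphi(y)\|^2 = f(x-y)$ with
\[
  f(x) \;=\; 2\sum_{k=1}^n \bigl(1 - \cos(2\pi x_k)\bigr),
\]
up to the harmless scaling factor. Since the expansion constraint $CI - 4\pi^2 \sum_u z(u) uu^{\sf T} \in \mathcal{S}^n_+$ becomes $C I - 4\pi^2 \cdot 2 I \succeq 0$, wait—one must be careful with the factor: with $z(u)=1$ on $\pm e_k$ the matrix $\sum_u z(u)uu^{\sf T} = 2I$, giving expansion bound $C = 8\pi^2$ after scaling, which is not yet $\pi^2/4$; the point is rather that the contraction and expansion must be computed and the ratio taken. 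Concretely: the expansion of $\varphi$ (before rescaling) is $\sup_{x\ne y} \|\varphi(x)-\varphi(y)\| / d_{\R^n/\Z^n}(x,y)$, attained in the limit $x\to y$ and equal to $2\pi$ (the derivative norm of $\varphi$), since $\|\varphi(x)-\varphi(y)\|^2 = f(x-y) \le 4\pi^2 \sum_k (x-y)_k^2 = 4\pi^2 d_{\R^n/\Z^n}(x,y)^2$ for $x-y \in V(\Z^n) = [-\tfrac12,\tfrac12]^n$, with equality attained only in the limit (this is exactly the content of the Lemma preceding Theorem~\ref{thm:primal-simple}). The contraction is $\sup_{x\ne y} d_{\R^n/\Z^n}(x,y)/\|\varphi(x)-\varphi(y)\|$; here the key one-variable estimate is that $1-\cos(2\pi t) \ge 8 t^2$ for $|t| \le \tfrac12$ (equivalently $\sin(\pi t)^2 \ge 4t^2$, i.e.\ $|\sin(\pi t)| \ge 2|t|$, which follows from concavity of $\sin$ on $[0,\pi]$), with equality precisely at $t = \pm\tfrac12$. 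Summing over coordinates, $f(x) \ge 16\sum_k x_k^2$ for $x \in [-\tfrac12,\tfrac12]^n$, so $\|\varphi(x)-\varphi(y)\| \ge 4\, d_{\R^n/\Z^n}(x,y)$. Therefore $\dist(\varphi) \le (2\pi)/4 = \pi/2$.

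Combining the two directions yields $c_2(\R^n/\Z^n) = \pi/2$ and that $\varphi$ is optimal. The main obstacle is purely the elementary one-variable inequality $|\sin(\pi t)| \ge 2|t|$ on $[-\tfrac12,\tfrac12]$ that controls the contraction; everything else (identifying the deep hole, computing $\lambda(\Z^n)$ and $\mu(\Z^n)$, reading off the expansion from the subquadratic estimate) is routine. One should also note for completeness that $\varphi$ is genuinely an embedding, i.e.\ injective, which is immediate since $\varphi$ restricted to each coordinate circle is injective on $\R/\Z$. It is also worth remarking that the same argument applies verbatim to any lattice $L$ with an orthogonal basis $b_1,\ldots,b_n$: rescaling so that $|b_i|=1$ reduces to the standard torus in each coordinate and gives $c_2(\R^n/L) = \pi/2$ for all such $L$, with the analogous product-of-circles embedding optimal.
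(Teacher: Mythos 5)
Your proof is correct and follows the same overall template as the paper: the lower bound $c_2(\R^n/\Z^n) \geq \pi/2$ comes from Theorem~\ref{thm:lowerbound1} with $\lambda(\Z^n)=1$ and $\mu(\Z^n)=\sqrt{n}/2$, and the matching upper bound comes from evaluating the expansion and contraction of the standard embedding coordinate by coordinate, which reduces both the paper's proof and yours to the single one-variable inequality $8t^2 \le 1-\cos(2\pi t)$ for $|t|\le 1/2$. The one genuine point of divergence is how this inequality is established: the paper proves it by showing that $t\mapsto\log\!\bigl(t^2/(1-\cos 2\pi t)\bigr)$ is convex on $(-1/2,1/2)$ and then using the boundary equality, whereas you derive it directly from Jordan's inequality $|\sin(\pi t)|\ge 2|t|$, which follows at once from concavity of $\sin$ on $[0,\pi]$ together with the endpoint values at $t=0$ and $t=1/2$. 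Your route is a bit more elementary and transparent for this particular inequality; the paper's log-convexity argument has the small side benefit of pinning down exactly where the inequality is tight and where it is strict, which they use implicitly when discussing the structure of most contracted pairs. A minor presentational remark: the middle paragraph of your proposal, where you puzzle over the paper's normalization constant $1/32$, can be cut entirely --- you correctly abandon that line and work with unnormalized expansion $2\pi$ and contraction $1/4$, which is cleaner. Your closing remark that the argument extends verbatim to lattices with orthogonal bases is a correct special case of Theorem~\ref{thm:direct-sum}.
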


\begin{proof}
  We have $\lambda(\Z^n) = 1$ and $\mu(\Z^n) = \sqrt{n/4}$, so
  $c_2(\R^n / \Z^n) \geq \pi/2$ by Theorem~\ref{thm:lowerbound1}.

  To show the corresponding upper bound we show that the embedding
  \[
    \phi(x_1, \ldots, x_n) =\frac{1}{\sqrt{32}}  ( e^{-2\pi
      ix_1},\ldots, e^{-2\pi ix_n})
  \]
  has contraction $1$ and expansion $\pi/2$. Then turning $\phi$ into a
  real embedding and rescaling does not change the distortion and
  gives the standard embedding $\varphi$.

  To show that $\phi$ has contraction $1$ and expansion $\pi/2$ it
  suffices to prove that $(\frac{\pi^2}{4},z)$ with
\[
   z(u) = \begin{cases}
    \frac{1}{32} & \text{if } u= \pm e_i,\\
    0 & \text{otherwise},
    \end{cases}
\]
is a feasible solution for~\eqref{eq:primal-simple}.

The expansion equals $\pi/2$ because
\[
  C I - 4 \pi^2 \sum_{u \in \Z^n} z(u) uu^{\sf T} = \frac{\pi^2}{4} I -
  4 \pi^2 \frac{2}{32} I = 0.
\]
Moreover, to show that the contraction equals $1$ we need to verify
the inequality
\begin{equation}
\label{eq:heart-inequality}
    \sum_{i=1}^n x_i^2 \leq \frac{4}{32} \sum_{i=1}^n (1-\cos(2\pi x_i)) \quad \text{for all } x \in V(\Z^n) = [-1/2,1/2]^n,
\end{equation}
which we check summand by summand, that is
$x_i^2 \leq \frac{1}{8} (1 - \cos(2\pi x_i)) = \frac{1}{4} \sin(\pi x_i)^2$, where we have equality
if $x_i = \pm 1/2$. This follows directly from the standard inequality $\sin \alpha \geq \frac{2}{\pi} \alpha$, for $\alpha \in [0,\pi/2]$. 
\end{proof}

Here it is interesting to note that even for the rather trivial
standard embedding of the standard torus the structure of the most
contracted pairs is rich. Every center of every face of the Voronoi
cell $V(\Z^n)$ gives a most contracted pair, see Figure~\ref{fig:lat:90}.



\medskip

Recapitulating the above proof, one recognizes that at its heart is the
verification of inequality~\eqref{eq:heart-inequality}. Here one
reduces the situation from $\Z^n$ to $\Z$. This works because $\Z^n$
can be orthogonally decomposed as the direct sum of $n$ copies of $\Z$
and this can be done in generality as the following theorem
demonstrates.

\begin{theorem}
\label{thm:direct-sum}
Let $L \subseteq \R^n$ be a lattice such that $L$ decomposes as the
orthogonal direct sum of lattices $L_1, \ldots, L_m$, i.e.
\begin{equation*}
  L = L_1 \perp L_2 \perp \ldots \perp L_m.
\end{equation*}
Then
\begin{equation*}
c_2(\R^n/L) = \max \{ c_2(\R^{n_j}/L_j) : j = 1, \ldots, m\},
\end{equation*}
where $\R^{n_j}$ is (isometric) to the Euclidean space spanned by
$L_j$.
\end{theorem}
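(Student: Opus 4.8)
The plan is to prove both inequalities separately, using the primal and dual programs. For the inequality $c_2(\R^n/L) \le \max_j c_2(\R^{n_j}/L_j)$, the idea is to paste together optimal embeddings of the factors. Concretely, since $L = L_1 \perp \cdots \perp L_m$ we have $L^* = L_1^* \perp \cdots \perp L_m^*$, the Voronoi cell splits as $V(L) = V(L_1) \times \cdots \times V(L_m)$, and $|x|^2 = \sum_j |x^{(j)}|^2$ for $x = (x^{(1)}, \ldots, x^{(m)})$. Let $(C_j, z_j)$ be an optimal solution of \eqref{eq:primal-simple} for $\R^{n_j}/L_j$, so $C_j = c_2(\R^{n_j}/L_j)^2$, and set $C = \max_j C_j$. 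Define $z$ on $L^*$ by supporting it only on vectors of the form $(0, \ldots, 0, u, 0, \ldots, 0)$ with $u \in L_j^*$ in the $j$-th slot, with value $z_j(u)$ there. Then $\sum_{u \in L^*} z(u) uu^{\sf T}$ is block-diagonal with blocks $\sum_{u \in L_j^*} z_j(u) uu^{\sf T} \preceq \frac{C_j}{4\pi^2} I \preceq \frac{C}{4\pi^2} I$, so the semidefinite constraint $CI - 4\pi^2 \sum z(u) uu^{\sf T} \in \mathcal S_+^n$ holds. For the contraction constraint, for $x \in V(L)$ we compute $2\sum_{u \in L^*} z(u)(1 - \cos(2\pi u^{\sf T} x)) = \sum_j 2\sum_{u \in L_j^*} z_j(u)(1 - \cos(2\pi u^{\sf T} x^{(j)})) \ge \sum_j |x^{(j)}|^2 = |x|^2$, using feasibility of each $(C_j, z_j)$. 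Hence $(C, z)$ is feasible for \eqref{eq:primal-simple} and $c_2(\R^n/L)^2 \le C = \max_j c_2(\R^{n_j}/L_j)^2$.

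For the reverse inequality $c_2(\R^n/L) \ge \max_j c_2(\R^{n_j}/L_j)$, I would use the dual program \eqref{eq:dual}. Fix the index $j_0$ achieving the maximum and take an optimal dual solution $(\nu_{j_0}, Y_{j_0})$ for $\R^{n_{j_0}}/L_{j_0}$, which by the remark after Theorem \ref{thm:dual-simple} satisfies $2\pi^2 \int_{V(L_{j_0})} |x|^2 \, d\nu_{j_0}(x) = c_2(\R^{n_{j_0}}/L_{j_0})^2$ and $\Tr Y_{j_0} = 1$. Transport $\nu_{j_0}$ to a measure $\nu$ on $V(L) = V(L_1) \times \cdots \times V(L_m)$ by placing it on the slice $\{0\} \times \cdots \times V(L_{j_0}) \times \cdots \times \{0\}$, i.e. $\nu$ is the pushforward of $\nu_{j_0}$ under $x \mapsto (0,\ldots, x, \ldots, 0)$; and embed $Y_{j_0}$ as the block-diagonal matrix $Y$ on $\R^n$ that is $Y_{j_0}$ in the $j_0$-block and $0$ elsewhere, so $\Tr Y = 1$ and $Y \in \mathcal S_+^n$. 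For any $u = (u^{(1)}, \ldots, u^{(m)}) \in L^*$, on the support of $\nu$ only the $j_0$-coordinate is nonzero, so $u^{\sf T} x = (u^{(j_0)})^{\sf T} x^{(j_0)}$, giving $\int_{V(L)} (1 - \cos(2\pi u^{\sf T} x)) \, d\nu(x) = \int_{V(L_{j_0})} (1 - \cos(2\pi (u^{(j_0)})^{\sf T} x)) \, d\nu_{j_0}(x) \le (u^{(j_0)})^{\sf T} Y_{j_0} u^{(j_0)} = u^{\sf T} Y u$, using $u^{(j_0)} \in L_{j_0}^*$ and feasibility of $(\nu_{j_0}, Y_{j_0})$. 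Thus $(\nu, Y)$ is feasible for \eqref{eq:dual}, and since $|x|^2 = |x^{(j_0)}|^2$ on $\supp \nu$ we get $c_2(\R^n/L)^2 \ge 2\pi^2 \int_{V(L)} |x|^2 \, d\nu(x) = c_2(\R^{n_{j_0}}/L_{j_0})^2 = \max_j c_2(\R^{n_j}/L_j)^2$. Combining the two directions and taking square roots yields the theorem.

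I expect the bookkeeping with the orthogonal splitting of $L^*$, $V(L)$, and the block structure of the matrices to be entirely routine; the only point requiring a little care is the bound $\sum_{u \in L_j^*} z_j(u) uu^{\sf T} \preceq \frac{C_j}{4\pi^2} I$ when these vectors are regarded inside $\R^n$ rather than $\R^{n_j}$ — one must note that in the ambient space this sum is supported on the $n_j$-dimensional coordinate subspace, so as a matrix on $\R^n$ it is the original $n_j \times n_j$ matrix padded with zeros, and the eigenvalue bound is unaffected. A minor subtlety on the dual side is the appeal to the equality version of \eqref{eq:dual} (the remark following Theorem \ref{thm:dual-simple}) to know that the optimal dual value of each factor is exactly $c_2(\R^{n_j}/L_j)^2$ rather than merely a lower bound; alternatively one could phrase the lower-bound direction as taking a near-optimal dual solution and passing to the supremum. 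Either way, no genuine obstacle arises — the theorem is essentially a compatibility statement between the orthogonal decomposition of lattices and the separable structure of both optimization problems.
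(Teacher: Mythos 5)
Your proof is correct, but it takes a genuinely different route from the paper's. The paper argues directly at the level of Hilbert-space embeddings: for the lower bound it observes that any embedding of $\R^n/L$ restricts to an embedding of the isometrically embedded subtorus $\R^{n_j}/L_j$ with no larger distortion; for the upper bound it takes contraction-$1$ embeddings $\varphi_j$ of the factors and checks directly that the product map $(x_1,\ldots,x_m) \mapsto (\varphi_1(x_1),\ldots,\varphi_m(x_m))$ into $H_1 \perp \cdots \perp H_m$ has contraction $1$ and expansion $\max_j C_j$. You instead work entirely through the optimization formulations: the upper bound by pasting primal solutions of~\eqref{eq:primal-simple} block-diagonally, and the lower bound by zero-padding a dual solution of~\eqref{eq:dual}. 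Both are sound, and your bookkeeping checks out (the splittings $L^* = L_1^* \perp \cdots \perp L_m^*$, $V(L) = V(L_1) \times \cdots \times V(L_m)$, and the block structure of $\sum_u z(u)uu^{\sf T}$ all behave as you claim). The main difference in cost is that your lower-bound direction relies on the strong duality remark following Theorem~\ref{thm:dual-simple} (or your near-optimal workaround), whereas the paper's lower bound is a one-line restriction argument that needs nothing beyond the definition of $c_2$. On the other hand, your argument shows explicitly how the primal/dual SDP pair respects orthogonal decompositions, which is a useful structural observation in its own right and would transfer to other settings where one only has the optimization characterization rather than an explicit embedding picture.
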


\begin{proof}
  Any Euclidean embedding of $\R^n/L$ gives a Euclidean embedding of
  $\R^{n_j}/L_j$ which immediately gives the inequality
  $c_2(\R^n/L) \geq \max \{ c_2(\R^{n_j}/L_j) : j = 1, \ldots, m\}$.

  Also the reverse inequality is easy to see. Let
  $ \varphi_j : \R^{n_j} / L_j \to H_j$ be a Euclidean embedding of
  $ \R^{n_j}/L_j $ with distortion $C_j$ scaled so that the
  contraction is $1$ and the expansion is $C_j$. We identify
  $ \R^n /L \cong \R^{n_1} /L_1\perp \ldots \perp \R^{n_m}/L_m $,
  write $ x \in \R^n/L $ as $x = (x_1,\ldots,x_m) $ with
  $x_j \in \R^{n_j}/L_j $ so that
  $d_{\R^n/L}(x,y)^2 = \sum_{j = 1}^m d_{\R^{n_j} /L_j}(x_j,y_j)^2
  $. Then
\[
\varphi: \R^n /L \to H:= H_1 \perp \ldots \perp H_m,  \quad   (x_1,\ldots,x_m) + L  \mapsto (\varphi_1(x_1), \ldots, \varphi_m(x_m))
\]
is a Euclidean embedding of $\R^n/L $ into the Hilbert space $H
$. Its distortion is at most $\max\{C_j : j = 1, \ldots, m\}$ because
for every pair $x, y \in \R^n / L$
\[
\begin{split}
  |\varphi(x)- \varphi(y)|^2 & = \sum_{j = 1}^m |\varphi_j(x_j)- \varphi_j(y_j)|^2 \leq \sum_{j = 1}^m C_j^2 d_{\R^{n_j}/L_j}(x_j,y_j)^2   \\
  & \leq \max_{j=1, \ldots, m} C_j^2 \; \sum_{j = 1}^m
  d_{\R^{n_j}/L_j}(x_j,y_j)^2 = \max_{j=1, \ldots, m} C_j^2 \;
  d_{\R^n/L}(x,y)^2,
\end{split}
\]
showing that the expansion of $\varphi$ at most
$\max\{C_j : j = 1, \ldots, m\}$ and, in exactly the same way, one
shows that the contraction of $\varphi$ is at most $1$.
\end{proof}

\section{Least Euclidean distortion embedding of the lattice $ D_n^*
  $}
\label{sec:dnstar}

In this section, we will construct a least Euclidean distortion
embedding for the lattice $ D_n^* $. To construct this embedding we
have to assign weights $z(u)$ for $u \in (D_n^*)^* = D_n $, a root
lattice,  which is
defined as
\begin{align*}
	D_n = \left\{ u \in \Z^n \, : \, \sum_{i = 1}^n u_i \text{ even}  \right\}.
\end{align*}
The shortest vectors of $ D_n $ are precisely its \emph{roots}, that is,
the vectors of squared length $ 2 $. They are given by
$ R(D_n) = \{\pm (e_i \pm e_j) :  i \neq j \}$.

To prove optimality of the embedding for $ D_n^* $ we will make use of
the lattice' symmetries. Generally, these symmetries
can be exploited to construct a symmetrized solution
for~\eqref{eq:primal-simple} as explained below.

Let $ G_{L^*} $ be the orthogonal group of the lattice $ L^*$, which
is the group of orthogonal matrices $ S $ satisfying $ SL^* = L^*
$. Note, that by definition of $ L^* $, the group $ G_L $ equals $G_{L^*}$.

Now if $ (C,z) $ is a solution of~\eqref{eq:primal-simple}, then this
solution can be symmetrized to a solution $ (C,\bar{z}) $ by taking
the group average over $ G_L $:
\begin{align*}
	\bar{z}(u) = \frac{1}{|G_L|} \sum_{S \in G_L} z(Su). 
\end{align*}

The tuple $ (C,\bar{z}) $ is feasible because $ \bar{z} \ge 0 $ and 
\begin{align*}
	CI - 4\pi^2\sum_{u \in L^*} \bar{z}(u) uu^{\sf{\top}} = \frac{1}{|G_L|} \sum_{S \in G_L} S^{\sf{\top}} (CI - 4\pi^2\sum_{u \in L^*} z(u)  uu^{\sf{\top}} )  S \in \mathcal{S}^n_+. 
\end{align*}
Moreover, for all $ x \in V(L) $: 
\begin{align*}
	2\sum_{u \in L^*} \bar{z}(u) (1-\cos(2\pi u^{\sf{\top}}x)) &= \frac{2}{|G_L|}\sum_{S \in G_L} \sum_{u \in L^*} z(Su) (1-\cos(2\pi u^{\sf{\top}}x))  \\
	&= \frac{2}{|G_L|} \sum_{S \in G_L} \sum_{u \in L^*} z(u) (1-\cos(2\pi (S^{-1}u)^{\sf{\top}}x))    \\
	&= \frac{2}{|G_L|} \sum_{S \in G_L} \sum_{u \in L^*} z(u) (1-\cos(2\pi u^{\sf{\top}}(Sx)))  \\
	&\ge \frac{1}{|G_L|} \sum_{S \in G_L}  |Sx|^2 \\
	&= |x|^2.
\end{align*}
By construction, $ \bar{z} $ is constant on orbits under the action of $ G_L $:
\begin{align}\label{eq:symmetrized-coefficients}
	\text{If } Su = v \text{ for some S} \in G_L, \text{ then } \bar{z}(u) = \bar{z}(v)
\end{align}
Moreover, we have 
\begin{align}\label{eq:no-coefficient-increase}
	\sum_{u \in L^*} z(u) = \sum_{u \in L^*} \bar{z}(u)
\end{align}

Back to $D_n^*$. The matrix group generated by the reflections
$ (I-xx^{\sf T}) $ for $ x \in \pm (e_i \pm e_j) $, for $ i \neq j $,
is called the \emph{Weyl group} of $ D_n $ and preserves the
lattice. It acts transitively on the roots; see, for examples,
\cite[Chapter~1]{Ebeling1994a} for details.

\begin{theorem}
An optimal solution for~\eqref{eq:primal-simple} for $D_n^*$ is given by $ (C,z) $ where 
	\begin{align*}
		z(u) = \begin{cases}
			\alpha, &\text{ if } u \in R(D_n) ,   \\
			0 &\text{otherwise},
		\end{cases}
	\end{align*}
	and 
	\begin{align*}
		C  = \max\left\{
		\frac{|x|^2}{2\sum_{u \in R(D_n)} (1- \cos(2\pi
			u^{\mathsf{T}}x))  } : x \in V(L)\setminus \{0\} \right\},
	\end{align*}
	and $ \alpha = \frac{C}{16 \pi^2 (n-1)}$.
      \end{theorem}
      
\begin{proof}
  Let $ (C,z) $ be an optimal solution of~\eqref{eq:primal-simple}. By
  Theorem~\ref{thm:finite-support}, we may assume that every element
  in $ \supp(z) $ is primitive and there is maximally one pair
  $ \pm u \in \supp(z) \cap v+2D_n$ for every coset $ v+2D_n $. Now
  suppose that there is $ u \in \supp(z) $ such that $ |u|^2 > 2 $.
  Assume that $ u = \sum_{i = 1}^n u_i e_i \in \supp(z)$ with
  $ u_i \in \Z $ and $ \sum_{i = 1}^n u_i $ even. Furthermore assume
  without loss of generality that $ u_1+u_2 $ is even (such a pair
  exists, as $ u \in D_n $).
	
                Now define
	\begin{align*}
		v = (I-(e_1+e_2)(e_1+e_2)^{\sf T}) u = -u_2e_1-u_1e_2 + \sum_{i =3}^n u_i e_i . 
	\end{align*}
	As $ \supp(z) $ contains only primitive vectors and $ |u|^2 > 2 $, it follows that $ v \neq -u $
	 
	Using~\eqref{eq:symmetrized-coefficients}, we can symmetrize
        the solution $ (C,z) $ to a solution $ (C,\bar{z}) $ which
        satisfies $ \bar{z}(u) = \bar{z}(v) $.  Since $ u_1+u_2 $ is
        even, it follows that $ \frac{u \pm v}{2}\in D_n$ and we can
        apply Lemma~\ref{lemma:support-decrease-simple} to obtain a
        new solution $ (C, \tilde{z}) $.
	
	Iterating this process and arguing as in the proof of
        Theorem~\ref{thm:finite-support}, we will obtain a sequence of
        solutions which will finally converge to a solution which has
        only support on the points of the form $ \pm (e_i\pm e_j) $
        for $ i \neq j $.  As the Weyl group acts transitively on the
        roots, it follows, that there is an optimal solution
        $ (C,\hat{z}) $ of~\eqref{eq:primal-simple} which has uniform
        support on the roots.
	
	Finally, we have that the matrix $ \sum_{u \in R(D_n)} uu^{\sf{\top}}$ is
        a positive multiple of the identity because the roots of
        $D_n$ form a spherical $ 2 $-design (see~\cite{Venkov2001a}
        for details). More precisely, one we have $\sum_{u \in R(D_n)} uu^{\sf{\top}} = \frac{I |R(D_n)}{n} I$ and $|R(D_n)| = 4\binom{n}{2} = 2n(n-1)$.
\end{proof}

\begin{remark}
  The argument cannot be straightforwardly carried over to the lattice
  $ A_n^* $. However, we conjecture that there is also an optimal
  solution for~\eqref{eq:primal-simple} which assigns uniform weights
  to the roots of $ A_n $. This has been proved by Moustrou and
  Vallentin~\cite{Moustrou2023a} for $n = 2$.
\end{remark}

\section{Least distortion embeddings of two-dimensional flat tori}
\label{sec:2d-embeddings}

In this section we will construct least Euclidean distortion
embeddings of flat tori in dimension $2$.

First, as a simple corollary of Theorem~\ref{thm:primal-simple}, we
will give a recipe to construct (possibly non-optimal) embeddings of
flat tori of arbitrary dimension provided that they satisfy the
following assumption:
\begin{align}\label{eq:identity-assumption}
	\textup{There exist } u_1,\ldots, u_k \in L^*,  \, z_1,\ldots,z_k \ge 0 \text{ such that }
	4\pi^2 \sum_{i = 1}^k z_i u_i u_i^{\sf T} = I.
\end{align}

As we will prove in Lemma~\ref{lemma:id-decomp-for-2d-case},
condition~\eqref{eq:identity-assumption} can be realized for every
$2$-dimensional lattice. Another example of lattices that satisfy
condition~\eqref{eq:identity-assumption} are duals of root
lattices. If $L^*$ is a root lattice, then
assumption~\eqref{eq:identity-assumption} is satisfied. In this case
the root system $R \subseteq L^*$ of $L^*$ forms a spherical
$2$-design, implying that
\[
4\pi^2 \alpha \sum_{u \in R} uu^{\sf T} =  I
\]
for some positive constant $\alpha$\footnote{The constant being $\alpha = \frac{|R|}{2\pi^2 n}$ as $\sum_{u \in R} uu^{\sf T} = \frac{2|R|}{n} I$.}. We refer to the monograph by
Venkov~\cite{Venkov2001a} for more information on root lattices and
spherical designs.

One referee pointed out that every lattice satisfies condition~\eqref{eq:identity-assumption}. This follows from \cite[Theorem 2.5]{Agostini2019a}: There is a unique discrete Gaussian distribution $p : L^* \to \R$ so that
\[
\sum_{u \in L^*} p(u) uu^{\sf T} =  I
\]
holds. 

\begin{corollary}
 \label{cor:general-embedding}
 Let $L \subseteq \R^n$ be a lattice that satisfies
 \eqref{eq:identity-assumption}.  Then
 \[
\varphi : \R^n/L \to \C^{k} ,  \quad
\varphi(x) = (\sqrt{D z_{1}} e^{2i\pi u_{1}^{\sf T}x}, \ldots, \sqrt{D z_{k}} e^{2i\pi
u_{k}^{\sf T}x})
\]
with
\begin{equation}
\label{eq:max-contraction-coeff}
D = \max\left\{
\frac{|x|^2}{2 \sum_{i = 1 }^k z_{i} (1- \cos(2\pi
  u_{i}^{\sf T}x))  } : x \in V(L)\setminus \{0\} \right\}
\end{equation}
is a Euclidean embedding of $\R^n /L$ with distortion
$\sqrt{D}$. In particular,
\begin{equation}
 \label{eq:lower-bound-from-general-construction}
c_2(\R^n /L)^2 \le D.
\end{equation}
\end{corollary}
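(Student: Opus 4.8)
The plan is to verify directly that the pair $(D, z)$, with $z$ given by $z(u_i) = D z_i$ on the vectors $u_i$ from assumption~\eqref{eq:identity-assumption} (and symmetrized over $\pm u_i$, with $z$ set to zero elsewhere), is a feasible solution for the primal program~\eqref{eq:primal-simple} of Theorem~\ref{thm:primal-simple}. Once feasibility is established, Theorem~\ref{thm:primal-sym} (equivalently Theorem~\ref{thm:primal-simple}) immediately yields that $\varphi$ as defined via~\eqref{eq:varphi-embedding} is a Euclidean embedding of $\R^n/L$ with $\dist(\varphi) \le \sqrt{D}$, and that $c_2(\R^n/L)^2 \le D$. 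So the work is entirely in checking the constraints.

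First I would check the semidefinite constraint $DI - 4\pi^2 \sum_u z(u) uu^{\sf T} \in \mathcal{S}^n_+$. By the choice $z(u_i) = D z_i$ and assumption~\eqref{eq:identity-assumption}, we get $4\pi^2 \sum_u z(u) uu^{\sf T} = D \cdot 4\pi^2 \sum_i z_i u_i u_i^{\sf T} = D I$, so the matrix in question is exactly $0$, which is positive semidefinite. (This also shows the expansion is exactly $\sqrt D$, not just at most $\sqrt D$, via the Lemma preceding Theorem~\ref{thm:primal-simple}.) Next, I would check the contraction inequality $|x|^2 \le 2\sum_u z(u)(1 - \cos(2\pi u^{\sf T} x))$ for all $x \in V(L)$. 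Writing out $2\sum_u z(u)(1-\cos(2\pi u^{\sf T}x)) = 2D\sum_{i=1}^k z_i(1 - \cos(2\pi u_i^{\sf T}x))$ (using $z(u) = z(-u)$ and $\cos$ even so the $\pm$ pairing just contributes the same term), the inequality becomes $|x|^2 \le 2D \sum_i z_i(1-\cos(2\pi u_i^{\sf T}x))$, which is precisely the defining property of $D$ as the maximum in~\eqref{eq:max-contraction-coeff}: for every $x \in V(L)\setminus\{0\}$ the ratio $\frac{|x|^2}{2\sum_i z_i(1-\cos(2\pi u_i^{\sf T}x))}$ is at most $D$, and for $x = 0$ both sides vanish. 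One should note that the denominator in~\eqref{eq:max-contraction-coeff} is strictly positive for $x \in V(L)\setminus\{0\}$ — this holds because the $u_i$ span $\R^n$ (their rank-one matrices sum to a multiple of $I$), so $u_i^{\sf T}x = 0$ for all $i$ forces $x = 0$; hence $D$ is a well-defined finite maximum over the compact set $V(L)$, attained since the ratio extends continuously (its limit at $0$ being finite by the subquadratic comparison $1-\cos\alpha \le \alpha^2/2$).

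The remaining conditions are the membership $z \in \ell^1(L^*)$ and $z(u) = z(-u) \ge 0$, both of which are immediate: $z$ has finite support $\{\pm u_1, \dots, \pm u_k\}$, is nonnegative since $D > 0$ and $z_i \ge 0$, and is symmetric by construction. This completes feasibility, and the conclusions of Corollary~\ref{cor:general-embedding} follow from Theorem~\ref{thm:primal-sym}. I do not anticipate a genuine obstacle here; the only point requiring a word of care is confirming that the maximum in~\eqref{eq:max-contraction-coeff} is finite and attained (so that $D < \infty$ and the inequality is actually an inequality rather than a supremum that might not be realized), which follows from the spanning property of the $u_i$ and continuity of the extended ratio on the compact Voronoi cell $V(L)$.
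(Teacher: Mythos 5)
Your proof takes the same route as the paper's: the paper's entire proof is the single observation that $(D,(Dz_i)_{1\le i\le k})$ is primal feasible for~\eqref{eq:primal-simple}, and you simply spell out that feasibility check. The verification of the semidefinite constraint (it reduces to the zero matrix) and of the contraction inequality (it is the definition of $D$) are both correct. A minor bookkeeping point: if you genuinely set $z(u_i)=z(-u_i)=Dz_i$ then $4\pi^2\sum_{u}z(u)uu^{\sf T}=2DI$ and the SDP constraint fails; one must either not symmetrize (allowed, since the paper notes $z(u)=z(-u)$ is redundant) or halve the weights. Your displayed formulas match the unsymmetrized choice, so this is just a matter of making the definition of $z$ precise.

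The one substantive flaw is in your argument that $D$ is a well-defined finite maximum. You claim the denominator $2\sum_i z_i(1-\cos(2\pi u_i^{\sf T}x))$ is strictly positive on $V(L)\setminus\{0\}$ because the $u_i$ span $\R^n$ and hence $u_i^{\sf T}x=0$ for all $i$ forces $x=0$. But the denominator vanishes whenever $u_i^{\sf T}x\in\Z$ for every $i$ in the support of $z$, not merely when $u_i^{\sf T}x=0$; the spanning property only addresses the latter. If the vectors $u_i$ with $z_i>0$ fail to generate $L^*$ as a lattice (e.g.\ $L^*=\Z^2$ and all $u_i\in 2\Z^2$), there exist nonzero $x\in V(L)$ annihilating the denominator, $D=\infty$, and $\varphi$ is not even injective. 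What is actually required is that the support of $z$ generates $L^*$, so that $u_i^{\sf T}x\in\Z$ for all $i$ forces $x\in L$, hence $x=0$ since $L\cap V(L)=\{0\}$. The paper leaves this hypothesis implicit in the corollary as well (it does hold in the two intended applications, since an obtuse superbasis generates $L^*$ and the roots of a root lattice generate it); you were right to flag well-definedness, but the justification you supply is not the one that is needed.
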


\begin{proof}
  The pair $((Dz_{i})_{1 \le i \le k}, D)$ is a feasible solution for
  the primal optimization problem \eqref{eq:primal-simple}.
\end{proof}

Except for the easiest case of the standard torus we do not know how
to determine $D$ explicitly.  Unfortunately, it seems to be difficult
to compute most contracted pairs $(0,x$), i.e.\ vectors $x \in V(L)$
that are maximizers of the right hand side
of~\eqref{eq:max-contraction-coeff}.

\medskip

Next, we show that Corollary~\ref{cor:general-embedding} can be applied
to every $2$-dimensional lattice. For this we will use the concept of
an \emph{obtuse superbasis}. An obtuse superbasis of an
$n$-dimensional lattice $L$ is a basis $u_1, \ldots, u_n$ of $L$
enlarged by the vector $u_0 = -u_1 - \cdots - u_n$ so that these
$n + 1$ vectors pairwise form non-acute angles, i.e.\
\begin{equation}
 \label{eq:obtuse-superbase-cond}
 u_i^\mathsf{T} u_j \le 0 \; \text{ for all } \; 0 \leq i < j \leq
 n.
\end{equation}
It is known that up to dimension $3$ all lattices have an obtuse
superbasis, but from dimension $4$ on this is no longer the case, see
for instance~\cite{Conway1992}.

\begin{lemma}
  \label{lemma:id-decomp-for-2d-case}
  If $L$ is a two-dimensional lattice, then its dual lattice
  $L^*$ satisfies~\eqref{eq:identity-assumption}. More precisely, one can choose any obtuse superbasis $u_0,u_1,u_2$ of $L^*$ for
  the representation~\eqref{eq:identity-assumption}.
\end{lemma}

\begin{proof}
  Let $u_0,u_1,u_2$ be an obtuse superbasis of $L^*$.  We will show
  that there are non-negative coefficients $z_0, z_1, z_2$ such that
 \[
   I = z_0 u_0u_0^\mathsf{T} + z_1 u_1u_1^\mathsf{T} + z_2
   u_2u_2^\mathsf{T} ,
\]
and therefore condition~\eqref{eq:identity-assumption} holds.

We may assume that $|u_1| \geq |u_0| = 1$, by
scaling and renumbering. Then, by Gram-Schmidt orthogonalization,
$u_0$ and $w_1 := u_1 - (u_0^\mathsf{T} u_1) u_0$ are orthogonal and
so
\[
\begin{split}
  I & = u_0 u_0^\mathsf{T} + \frac{1}{|w_1|^2}w_1w_1^\mathsf{T}\\
  & = \left(1+\frac{ (u_0^\mathsf{T} u_1)^2}{|w_1|^2}\right) u_0u_0^\mathsf{T} +
  \frac{1}{|w_1|^2} u_1u_1^\mathsf{T} - \frac{u_0^\mathsf{T}
    u_1}{|w_1|^2}(u_0u_1^\mathsf{T}+u_1u_0^\mathsf{T}).
\end{split}
\]
Using
\[
  u_2 u_2^{\sf T} = (-u_0 - u_1) (-u_0 - u_1)^{\sf T} = u_0 u_0 + u_1 u_0^{\sf T} + u_0
  u_1^{\sf T} + u_1 u_1^{\sf T},
\]
yields
\[
I =\left(1+\frac{ (u_0^\mathsf{T} u_1)^2 +
    u_0^\mathsf{T}u_1}{|w_1|^2}\right)u_0u_0^\mathsf{T} +\frac{1+
  u_0^\mathsf{T}u_1}{|w_1|^2}u_1u_1^\mathsf{T} - \frac{ u_0^\mathsf{T} u_1}{|w_1|^2}u_2u_2^\mathsf{T}.
\]
To prove that the three coefficients in the above sum are
non-negative, observe for the third coefficient that $u_0^\mathsf{T}u_1 \le 0
$. For the second coefficient
\[
0 \le  - u_0^\mathsf{T}u_2  = u_0^\mathsf{T}u_0 + u_0^\mathsf{T} u_1 =
1 +u_0^\mathsf{T} u_1.
\]
For the first coefficient we compute the squared norm $|w_1|^2 =
|u_1|^2 - (u_0^{\sf T} u_1)^2$ and see that the first coefficient is
nonnegative if and only if $|u_1|^2 \geq -u_0^{\sf T} u_1$, which is
true because $|u_1| \geq |u_0| = 1$ by assumption.
\end{proof}

Now, to verify that the embedding of
Corollary~\ref{cor:general-embedding} is indeed a least Euclidean
distortion embedding for two-dimensional flat tori, we construct a
dual solution for \eqref{eq:dual} that shows that the
upper bound~\eqref{eq:lower-bound-from-general-construction} is sharp.

\begin{theorem}
\label{thm:dual-solution-for-2d-case}
Let $L \subseteq \R^2$ be a $2$-dimensional lattice, then
$c_2(\R^2/L)^2 = D$, where $D$ is defined
in~\eqref{eq:max-contraction-coeff}.
\end{theorem}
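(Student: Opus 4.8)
The plan is to construct an explicit dual-feasible pair $(\nu, Y)$ for \eqref{eq:dual} whose objective value equals $D$, thereby matching the upper bound $c_2(\R^2/L)^2 \le D$ from Corollary~\ref{cor:general-embedding}. First I would fix the obtuse superbasis $u_0, u_1, u_2$ of $L^*$ and the non-negative coefficients $z_0, z_1, z_2$ produced by Lemma~\ref{lemma:id-decomp-for-2d-case}, so that $4\pi^2 \sum_{i=0}^2 z_i u_i u_i^{\sf T} = I$. Let $x^* \in V(L)\setminus\{0\}$ be a maximizer of the quotient in \eqref{eq:max-contraction-coeff}, i.e.\ $|x^*|^2 = D \cdot 2\sum_{i=0}^2 z_i(1 - \cos(2\pi u_i^{\sf T} x^*))$. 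The natural guess, mirroring the proof of Theorem~\ref{thm:lowerbound1}, is to take $\nu$ supported on a single "most contracted" point $x^*$ (or possibly a small orbit of such points), $\nu = t\,\delta_{x^*}$ for a scalar $t > 0$ to be tuned, and $Y$ a rank-one or carefully chosen PSD matrix with $\Tr(Y) = 1$.

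The key is the complementary-slackness information spelled out at the end of Theorem~\ref{thm:dual-simple}. Since $z = (Dz_i)$ attains equality $CI - 4\pi^2\sum z(u)uu^{\sf T} = 0$ in the primal, the condition $\bigl(CI - 4\pi^2\sum z(u)uu^{\sf T}\bigr)Y = 0$ is automatic for \emph{any} PSD $Y$ — so $Y$ is only constrained by $\Tr Y = 1$ together with the dual inequalities $\int (1-\cos(2\pi u^{\sf T}x))\,d\nu(x) \le u^{\sf T} Y u$ for all $u \in L^*$. By Lemma~\ref{lem:equality-for-Vor-rel-vecs} it suffices to make these tight on one shortest vector in each coset of $L^*/2L^*$; and since the optimal primal $z$ is supported exactly on $\{\pm u_0, \pm u_1, \pm u_2\}$, complementary slackness forces equality precisely at $u_0, u_1, u_2$. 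Thus I would choose $t$ and $Y$ to solve the linear system $t\,(1 - \cos(2\pi u_i^{\sf T} x^*)) = u_i^{\sf T} Y u_i$ for $i = 0, 1, 2$, subject to $\Tr Y = 1$. This is four linear equations in the four unknowns (three entries of the symmetric $2\times 2$ matrix $Y$ plus the scalar $t$); I expect it to have a solution with $Y \succeq 0$, using that $u_0, u_1, u_2$ with weights $z_i$ "tile" the identity. Once $t$ is found, the objective is $2\pi^2 t |x^*|^2$, and plugging in $|x^*|^2 = 2D\sum z_i(1-\cos(2\pi u_i^{\sf T}x^*))$ together with the tightness relations and $4\pi^2\sum z_i u_i u_i^{\sf T} = I$ (hence $\sum_i z_i u_i^{\sf T} Y u_i = \tfrac{1}{4\pi^2}\Tr Y = \tfrac{1}{4\pi^2}$) should collapse the expression to exactly $D$.

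The main obstacle I anticipate is twofold: first, verifying $Y \succeq 0$ and $t \ge 0$ for the solution of that linear system — this likely requires the obtuseness $u_i^{\sf T} u_j \le 0$ and possibly a geometric fact that a maximizer $x^*$ of the contraction quotient lies on (or near) a face of $V(L)$ whose normal structure is compatible with the $u_i$; and second, confirming that the \emph{remaining} dual inequalities, for $u \in L^* \setminus \{0, \pm u_0, \pm u_1, \pm u_2\}$, do hold — here I would invoke Lemma~\ref{lem:equality-for-Vor-rel-vecs}, after checking that $\pm u_0, \pm u_1, \pm u_2$ are indeed shortest in their respective cosets mod $2L^*$ (this is standard for an obtuse superbasis in dimension $2$, where the Voronoi-relevant vectors of $L^*$ are exactly $\pm u_0, \pm u_1, \pm u_2$, by the Conway–Sloane characterization cited in the paper). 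If a single point mass does not suffice to satisfy $\Tr Y = 1$ with $Y \succeq 0$, the fallback is to spread $\nu$ over the full orbit of $x^*$ under the point symmetry $x \mapsto -x$ (and any lattice automorphisms fixing the configuration), averaging $Y$ correspondingly; by the symmetry-reduction philosophy this does not change optimality and tends to make $Y$ proportional to $I$, which trivially satisfies the trace and positivity constraints.
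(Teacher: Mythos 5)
Your proposal follows essentially the same route as the paper: the same obtuse superbasis $u_0,u_1,u_2$ of $L^*$, the same identity decomposition from Lemma~\ref{lemma:id-decomp-for-2d-case}, a one-point measure at a maximizer $\bar x$ of~\eqref{eq:max-contraction-coeff}, a matrix $Y$ determined by imposing equality in the dual constraint at $u_0,u_1,u_2$, the trace normalization, and Lemma~\ref{lem:equality-for-Vor-rel-vecs} (with the Conway--Sloane characterization of Voronoi-relevant vectors) to handle all remaining $u\in L^*$. This is exactly what the paper does; the only structural remark is that you add $\Tr Y = 1$ as a fourth equation, whereas the three equations at $u_0,u_1,u_2$ already determine $Y$ uniquely (since $u_0u_0^{\sf T},u_1u_1^{\sf T},u_2u_2^{\sf T}$ span $\mathcal S^2$), and then the normalizing scalar $\beta = D/(2\pi^2|\bar x|^2)$ is forced by the optimality identity $|\bar x|^2 = 2D\sum z_i(1-\cos(2\pi u_i^{\sf T}\bar x))$; your ``fallback'' of averaging over an orbit is unnecessary.

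The genuine gap is the positive semidefiniteness of $Y$, which you flag as ``the main obstacle'' but do not resolve, and it is in fact the only nontrivial verification left. You also guess that it ``likely requires the obtuseness $u_i^{\sf T}u_j\le 0$'' or a geometric fact about $\bar x$ lying near a face of $V(L)$; neither is the case. The paper's argument is purely trigonometric and uses only $u_2=-(u_0+u_1)$: writing $a=\pi u_0^{\sf T}\bar x$, $b=\pi u_1^{\sf T}\bar x$, the Gram matrix $B_{ij}=u_i^{\sf T}Yu_j$ (for $i,j\in\{0,1\}$) has diagonal entries $2\beta\sin^2 a$, $2\beta\sin^2 b$ and off-diagonal entry
\begin{align*}
B_{01} &= \beta\bigl(\sin^2(a+b)-\sin^2 a-\sin^2 b\bigr) = 2\beta\sin a\,\sin b\,\cos(a+b),
\end{align*}
so that
\begin{align*}
B = 2\beta\begin{pmatrix}\sin a & 0\\ 0 & \sin b\end{pmatrix}\begin{pmatrix}1 & \cos(a+b)\\ \cos(a+b) & 1\end{pmatrix}\begin{pmatrix}\sin a & 0\\ 0 & \sin b\end{pmatrix},
\end{align*}
a Schur--Hadamard product of a rank-one PSD matrix with a correlation matrix, hence PSD. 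The obtuseness of the superbasis is used only in Lemma~\ref{lemma:id-decomp-for-2d-case} (to get $z_i\ge 0$) and to ensure the $u_i$ are shortest in their cosets (so Lemma~\ref{lem:equality-for-Vor-rel-vecs} applies); it plays no role in the PSD check. You should carry out this computation explicitly to close the gap.
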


\begin{proof}
  Let $u_0,u_1,u_2$ be an obtuse superbasis of $L^*$. We may assume,
  see for example \cite{Conway1992}, that this superbasis is chosen in
  a way such that $u_i$ is a shortest vector in its coset
  $u_i + 2L^*$, with $i = 0, 1, 2$. By
  Lemma~\ref{lemma:id-decomp-for-2d-case} we can determine coefficients
  $z_0,z_1,z_2 \ge 0$ such that
  $4\pi^2 \sum_{i = 0}^2 z_i u_iu_i^\mathsf{T} = I$.

  Furthermore, let $\bar{x} \in V(L)$ be a vector such that $(0,\bar{x})$
  is a most contracted pair for the embedding~$\varphi$ of
  Corollary~\ref{cor:general-embedding}, that is, $\bar{x}$ is a
  maximizer for~\eqref{eq:max-contraction-coeff}.

  We define the pair $(Y,\nu)$ as follows: Set $\beta =
  \frac{D}{2\pi^2 |\bar{x}|^2}$, define $Y$ via
    \begin{align}
      \label{eq:linear-eqs-for-Y-in-2d-case}
      \Tr(u_{i}u_{i}^\mathsf{T}Y) =  \beta (1-\cos(2\pi u_{i}^\mathsf{T} \bar{x})), \quad i \in \{0,1,2\},
    \end{align}
    and let $\nu = \beta \delta_{\bar{x}}$ be a point measure supported
    only on $\bar{x}$. We now verify that this pair is a feasible dual
    solution with objective value $D$.

    We have
	\begin{align*}
		\Tr(Y) = \Tr(Y I) = 4\pi^2 \sum_{i} z_i
          \Tr(u_iu_i^\mathsf{T} Y) = 4 \pi^2 \beta  \sum_i z_i (1-\cos(2\pi u_{i}^\mathsf{T}\bar{x} )) = 1.
	\end{align*}

	Equation~\eqref{eq:linear-eqs-for-Y-in-2d-case} together with
        Lemma~\ref{lem:equality-for-Vor-rel-vecs} implies
	\begin{align*}
		\Tr(u u^\mathsf{T}Y)\geq  \beta (1-\cos(2\pi
          u^\mathsf{T} \bar{x})) \quad \text{ for all } u \in L^*.
	\end{align*}

	Finally, it remains to show that $Y$ is positive semidefinite.
        For this we compute its Gram matrix $B$ with respect to $u_0,
        u_1$, that is
	\begin{align*}
		B_{ij} =u_i^\mathsf{T}Y u_j = \frac{1}{2}\Tr((u_i u_j^\mathsf{T} + u_j u_i^\mathsf{T})Y), \quad 0 \le i,j \le 1.
	\end{align*}
	Then $B_{ii} = \Tr(u_iu_i^\mathsf{T}{Y}) =
        2 \beta \sin^2(\pi u_i^\mathsf{T}\bar{x})$.
	Since
	\begin{align*}
		u_0 u_1^\mathsf{T} + u_1 u_0^\mathsf{T} = u_{2}u_{2}^\mathsf{T} - u_0u_0^\mathsf{T} - u_1u_1^\mathsf{T},
	\end{align*}
	we get
	\begin{align*}
		B_{01} &= \frac{\beta}{2}\Big (2\sin^2(\pi u_{2}^\mathsf{T}\bar{x}) - 2\sin^2(\pi u_{1}^\mathsf{T}\bar{x}) - 2\sin^2(\pi u_{0}^\mathsf{T}\bar{x}) \Big )   \\
		&=2\beta \sin(\pi u_{0}^\mathsf{T}\bar{x})\sin(\pi u_{1}^\mathsf{T}\bar{x})\cos(\pi (u_0+u_1)^\mathsf{T}\bar{x}).
	\end{align*}
        From this we see that matrix $B$ is the Schur-Hadamard
        (entry-wise) product of the positive semidefinite rank-one
        matrix $xx^\mathsf{T}$ with
        $x_i = \sqrt{2\beta} \sin(\pi u_i^\mathsf{T}\bar{x})$ and the
        symmetric matrix $M \in \R^{2 \times 2}$ defined by
	\begin{align*}
		M_{ij} = \begin{cases}
			1 &\text{ if } i = j  \\
			\cos(\pi (u_0+u_1)^\mathsf{T}\bar{x}) &\text{ if } (i,j) \in \{(0,1),(1,0)\}.
		\end{cases}
	\end{align*}
	The matrix $M$ is positive semidefinite because $M_{ii} \ge 0$ and
	\begin{align*}
		\det(M) = 1 - \cos^2(\pi (u_0+u_1)^\mathsf{T}\bar{x}) \ge 0.
	\end{align*}
        Thus $B$, and therefore also $Y$, is positive semidefinite,
        which finishes the proof.
\end{proof}

To conclude the discussion of $2$-dimensional lattices
Figure~\ref{fig:lats:comparison} collects an illustration of the
behavior of the distortion function defined in
\eqref{eq:max-contraction-coeff} and the most contracted pairs,
applying the above results, for a selection of $2$-dimensional
lattices.

\begin{figure}
\centering
\begin{subfigure}{.49\textwidth}
  \centering
  \includegraphics[width=\linewidth]{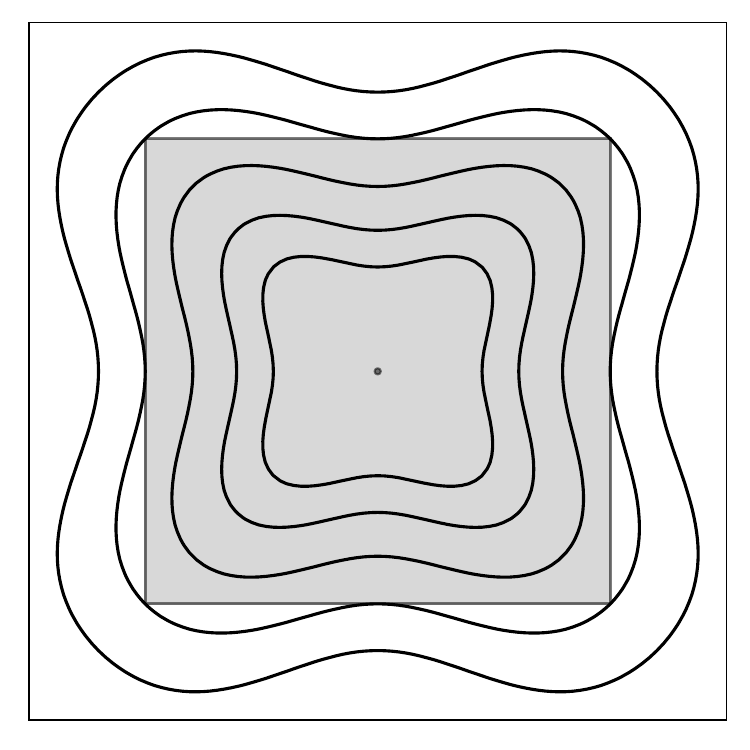}
  \caption{$L_{90^\circ}$ ($\Z^2$)}
  \label{fig:lat:90}
\end{subfigure}%
\hfill
\begin{subfigure}{.49\textwidth}
  \centering
  \includegraphics[width=\linewidth]{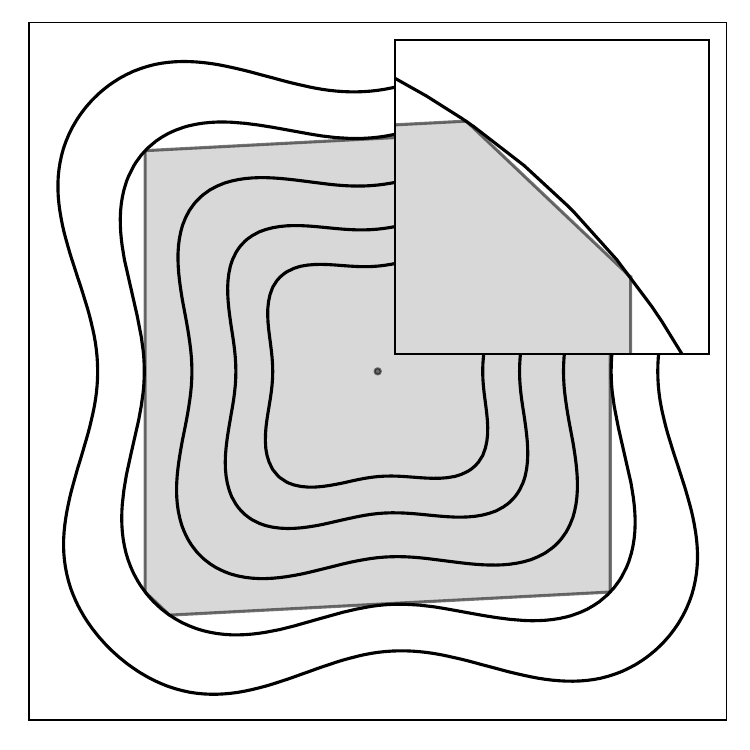}
  \caption{$L_{93^\circ}$}
  \label{fig:lat:93:zoom}
\end{subfigure}
\vskip\baselineskip
\begin{subfigure}{.49\textwidth}
  \centering
  \includegraphics[width=\linewidth]{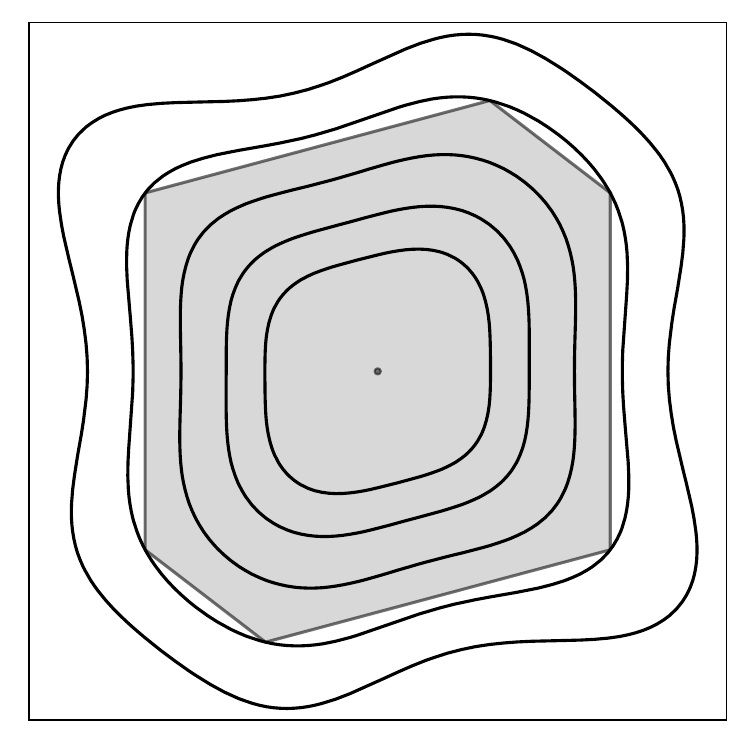}
  \caption{$L_{105^\circ}$}
  \label{fig:lat:105}
\end{subfigure}
\hfill
\begin{subfigure}{.49\textwidth}
  \centering
  \includegraphics[width=\linewidth]{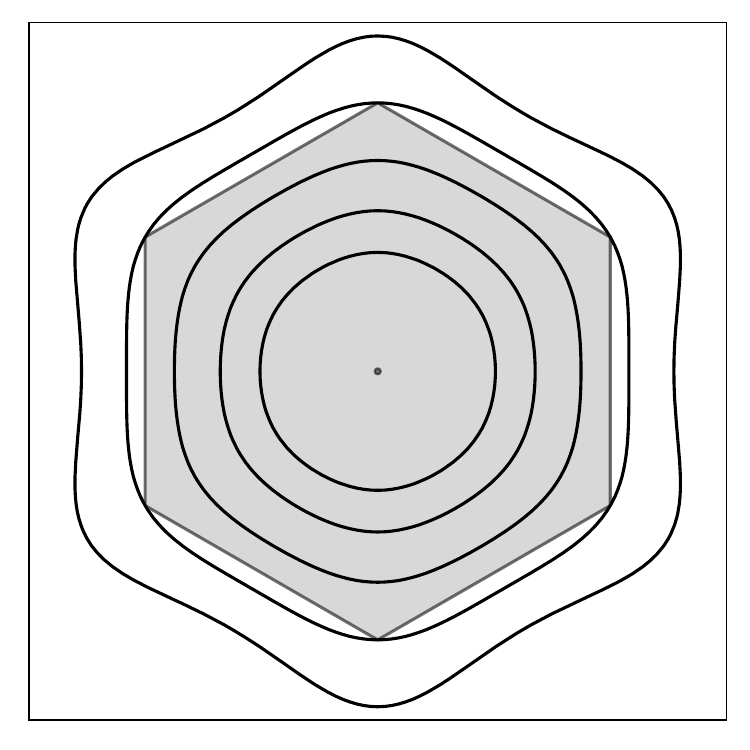}
  \caption{$L_{120^\circ}$ ($A_2^*$)}
  \label{fig:lat:120}
\end{subfigure}%
\caption{
  Let $L_\varphi$ be the lattice spanned by  $v_1 = e_1$, $v_2 = R_\varphi e_1$, where $R_\varphi$ is the rotation by $\varphi$ degrees (counter-clockwise).\\
  The Voronoi cell of any lattice in $\R^2$ is either a rectangle or a hexagon. We plot contour lines of the distortion function for a selection of lattices that illustrate how the distortion function and the most contracted pairs vary with the shape of the Voronoi cell. $L_{93^\circ}$ shows what happens for almost degenerated hexagons, i.e. lattices close to the standard lattice. A zoom into the behavior around the short edge of the hexagon is included to illustrate that the most contracted points are all vertices of the Voronoi-cell.
  }
  \label{fig:lats:comparison}

\end{figure}

\section{Discussion and open questions}
\label{sec:open-questions}

In this paper we derived an infinite-dimensional semidefinite program
to determine the least distortion Euclidean embedding of a flat
torus. It would be very interesting to show that this
infinite-dimensional semidefinite program can in fact be turned into a
finite-dimensional semidefinite program. Then one could, similarly to the case of
finite metric spaces, algorithmically determine least distortion
Euclidean embeddings of flat tori; at least up to any desired
precision.

For this a characterization of the most contracted pairs is needed. We
believe that the most contracted pairs are always of the form $(0,y)$
and $y$ is a center of a face of the Voronoi cell. However, we do not
know whether such a $y$ can only lie on the Voronoi cell's
boundary. We do not even know whether there are only finitely many
most contracted pairs.

Moustrou and Vallentin~\cite{Moustrou2023a} computed the most
contracted pairs for the lattices $A_2^*$ and $E_8$ using a
modification of the linear programming method for spherical designs.

We also do not know how to restrict the variable $z \in \ell^1(L^*)$
to finite dimension, even though Theorem~\ref{thm:finite-support}
shows that we can always find a finite-dimensional least distortion
embedding. Obtaining a bound on the maximally needed length of a
support vector in the cosets $L^*/2L^*$ would solve this problem.

Another interesting problem is to determine $n$-dimensional lattices
which maximize the distortion among all $n$-dimensional lattices.

\section*{Acknowledgements}

We would like to sincerely thank the two anonymous referees, whose thorough reading of the manuscript and many valuable comments and suggestions have greatly helped to improve the paper.

\smallskip

This project has received funding from the European Union's Horizon
2020 research and innovation programme under the Marie
Sk\l{}odowska-Curie agreement No 764759. F.V. is partially supported
by the SFB/TRR 191 ``Symplectic Structures in Geometry, Algebra and
Dynamics'', F.V. and M.C.Z. are partially supported ``Spectral bounds
in extremal discrete geometry'' (project number 414898050), both
funded by the DFG. A.H. is partially funded by the Deutsche Forschungsgemeinschaft (DFG, German Research Foundation) under Germany's Excellence Strategy -- Cluster of Excellence Matter and Light for Quantum Computing (ML4Q) EXC 2004/1 -- 390534769.

\section*{Conflict of interest}

The authors declare that there are no conflicts of interest regarding the publication of this paper.

\end{document}